\newtheorem{theorem}{Theorem}[section]
\newtheorem{lemma}[theorem]{Lemma}
\newtheorem{prop}[theorem]{Proposition}
\theoremstyle{definition}
\newtheorem{definition}[theorem]{Definition}
\theoremstyle{remark}
\numberwithin{equation}{section}
\newcommand{\N}{\mathbb N}
\title{Inverse of $\mathcal{U}$-frequently hypercyclic operators}
\author[Q. Menet]{Quentin Menet}
\address{Quentin Menet, Univ. Artois, EA 2462, Laboratoire de Mathématiques de Lens (LML), F-62300 Lens, France}
\email{quentin.menet@univ-artois.fr}
\subjclass[2010]{47A16}
\keywords{}
\thanks{The authors 
were supported by the grant ANR-17-CE40-0021 of the French National 
Research Agency ANR (project Front)}
\begin{document}
\begin{abstract}
We show that there exists an invertible $\mathcal{U}$-frequently hypercyclic operator on $\ell^p(\N)$ ($1\le p <\infty$) whose inverse is not $\mathcal{U}$-frequently hypercyclic.
\end{abstract}
\maketitle
\section{Introduction}
Let $X$ be a separable infinite-dimensional Fréchet space and $T$ an operator on $X$. We say that $T$ is hypercyclic if there exists a vector $x$ in $X$ such that $\text{Orb}(x,T):=\{T^nx:n\ge 0\}$ is dense in $X$, or equivalently, such that for every non-empty open set $U$, the set $\mathcal{N}_T(x,U):=\{n\ge 0:T^{n}x\in U\}$ is infinite. Birkhoff~\cite{Bir} showed that $T$ is hypercyclic if and only if $T$ is topologically transitive, \emph{i.e.} for every non-empty open sets $U, V$, there exists $n\ge 0$ such that $T^nU\cap V\ne \emptyset$. In particular, if $T$ is invertible, it follows that $T$ is hypercyclic if and only if $T^{-1}$ is hypercyclic. 

Several variants of hypercyclicity have been deeply investigated during last years. For instance, Bayart and Grivaux~\cite{2Bayart0, 2Bayart} introduced the notion of frequent hypercyclicity in 2004 and the notion of $\mathcal{U}$-frequent hypercyclicity was introduced by Shkarin~\cite{Shkarin} in 2009. An operator $T$ is said to be frequently hypercyclic if there exists a vector $x$ in $X$ such that for every non-empty open set $U$, $\underline{\text{dens}}\ \mathcal{N}_T(x,U)>0$, and an operator $T$ is said to be $\mathcal{U}$-frequently hypercyclic if there exists a vector $x$ in $X$ such that for every non-empty open set $U$, $\overline{\text{dens}}\ \mathcal{N}_T(x,U)>0$. No Birkhoff-type characterization is known for frequent hypercyclicity while for $\mathcal{U}$-frequent hypercyclicity, a Birkhoff-type characterization was given by Bonilla and Grosse-Erdmann~\cite{Bon}.

An important open question posed in \cite{2Bayart} and which can also be found in \cite{BayartR, Guirao} consists in determining if the inverse of a frequently hypercyclic operator is still frequently hypercyclic. This question is also open for $\mathcal{U}$-frequently hypercyclic operators and was posed explicitly by Grosse-Erdmann~\cite{Grosse}. In fact, we know thanks to Bayart and Ruzsa~\cite{BayartR} that if $T$ is invertible and frequently hypercyclic then $T^{-1}$ is $\mathcal{U}$-frequently hypercyclic. However, it is not clear if the inverse of a frequently hypercyclic operator is frequently hypercyclic or if the inverse of a $\mathcal{U}$-frequently hypercyclic operator is $\mathcal{U}$-frequently hypercyclic. Although we have a Birkhoff-type characterization for $\mathcal{U}$-frequent hypercyclicity, it is not obvious to determine if the topological characterization of $\mathcal{U}$-frequent hypercyclicity passes to its inverse. Note that for the reiterative hypercyclicity introduced in \cite{Bes} and relying on the upper Banach density, Bonilla and Grosse-Erdmann~\cite{Bon} have shown that the inverse of a reiteratively hypercyclic operator is reiteratively hypercyclic.

We focus in this paper on the case of $\mathcal{U}$-frequently hypercyclic operators and show that there exists an invertible $\mathcal{U}$-frequently hypercyclic operator whose inverse is not $\mathcal{U}$-frequently hypercyclic. Since such a counterexample cannot be provided  by bilateral weighted shifts on $\ell^p(\mathbb{Z})$ \cite{BayartR} or on $c_0(\mathbb{Z})$ \cite{Grosse}, we will consider operators of C-type which have been introduced in \cite{Menet} in order to exhibit a chaotic operator that is not $\mathcal{U}$-frequently hypercyclic and which have been deeply investigated in \cite{Monster}. In this last paper, these operators have allowed to exhibit, among others, frequently hypercyclic operators which are not ergodic or $\mathcal{U}$-frequently hypercyclic operators which are not frequently hypercyclic on Hilbert spaces. However, each of these counterexamples were not invertible and it will be necessary to adapt several results relating on operators of C-type in order to obtain the desired counterexample.

We refer the reader to the recent books~\cite{Bayartbook, KGEbook} for more information on linear dynamics.

\section{Invertible operators of C-type} 

An operator of C-type is associated to four parameters $v$, $w$, $\varphi $, and $b$, where

\begin{enumerate}
 \item[-] $v=(v_{n})_{n\ge 1}$ is a sequence of non-zero complex numbers such 
that we have ${\sum_{n\ge 1}|v_{n}|<\infty}$;
\item[-] $w=(w_{k})_{k\geq 1}$ is a sequence of complex numbers which is both bounded  
and bounded below, \mbox{\it i.e.} $0<\inf_{k\ge 1} \vert w_k\vert\leq \sup_{k\ge 1}\vert w_k\vert<\infty$, and such that $\inf_{n\geq 0} |W_n| >0$ where $W_n=\prod_{b_n<j<b_{n+1}}  w_j$,
\item[-] $\varphi $ is a map from $\N$ into itself, such that $\varphi 
(0)=0$, $\varphi (n)<n$ for every $n\ge 1$, and the set 
$\varphi ^{-1}(l)=\{n\ge 0\,:\,\varphi (n)=l\}$ is infinite for every 
$l\ge 0$;
\item[-] $b=(b_{n})_{n\ge 0}$ is a strictly increasing sequence of positive 
integers such that $b_{0}=0$ and $b_{n+1}-b_{n}$ is a multiple of 
$2(b_{\varphi (n)+1}-b_{\varphi (n)})$ for every $n\ge 1$.
\end{enumerate}

\begin{definition}\label{Definition 43}
 The \emph{operator of C-type} $T_{v,w,\varphi,b}$ on $\ell^p(\mathbb{N})$ is defined by
\[
T_{v,w,\varphi,b}\ e_k=
\begin{cases}
 w_{k+1}\, e_{k+1} & \textrm{if}\ k\in [b_{n},b_{n+1}-1),\; n\geq 0,\\
v_{n}\, e_{b_{\varphi(n)}}-W_n^{ -1 } e_{
b_{n}} & \textrm{if}\ k=b_{n+1}-1,\ n\ge 1,\\
 -W_0^{-1}e_0& \textrm{if}\ 
k=b_1-1.
\end{cases}
\]
\end{definition}

An important characteristic of operators of C-type is that every finite sequence is periodic for these operators. Indeed, if $T$ is an operator of C-type associated to $(b_n)$ then for every $k\in [b_{n},b_{n+1})$, we can compute that $T^{2(b_{n+1}-b_n)}e_k=e_k$ (see \cite[Lemma 6.4]{Monster}).\\

We start by stating sufficient conditions on parameters $v$ and $w$ so that an operator of C-type associated to $v$ and $w$ is invertible.

\begin{prop}\label{invertible}
If $\sup_{n\geq 0} |W_n|<\infty$ and $|v_n|<\frac{1}{2^{n}\sup_{n\geq 0} |W_n|}$ for every $n\ge 1$, then the operator of C-type $T_{v,w,\varphi,b}$ is invertible on $\ell^p(\N)$ ($1\le p<\infty$) and 
\begin{itemize}
\item for every $n\geq 0$, every $k\in (b_{n},b_{n+1})$,
\[T^{-1}_{v,w,\varphi,b}\ e_k= \frac{1}{w_{k}}\, e_{k-1};\]
\item $T^{-1}_{v,w,\varphi,b}e_0= -W_0\ e_{b_1-1}$;
\item for every $n\ge 1$,
\[
T^{-1}_{v,w,\varphi,b}\ e_{b_n}=-\sum_{m=0}^{m_n-1} \Big(\prod_{l=0}^mv_{\varphi^l(n)}\Big)\,\Big(\prod_{l=0}^{m+1}W_{\varphi^{l}(n)}\Big) e_{b_{\varphi^{m+1}(n)+1}-1}-W_n \ e_{
b_{n+1}-1}\]
where $m_n=\min\{m\ge 0:\varphi^m(n)=0\}$.
\end{itemize}
\end{prop}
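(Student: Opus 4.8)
The plan is to exhibit the operator $S$ defined on the canonical basis by the three displayed formulas, to check that it is bounded on $\ell^p(\N)$, and then to verify that $TS=ST=I$ on each basis vector; since $T$ and $S$ are both bounded, these identities extend by continuity from the dense span of the $e_k$ to the whole space, giving that $T:=T_{v,w,\varphi,b}$ is invertible with $T^{-1}=S$. Conceptually, $S$ arises as follows. I would write $T=C+V$, where $C$ is the block-diagonal part $Ce_k=w_{k+1}e_{k+1}$ on $[b_n,b_{n+1}-1)$, $Ce_{b_{n+1}-1}=-W_n^{-1}e_{b_n}$, $Ce_{b_1-1}=-W_0^{-1}e_0$, and $V$ is the coupling $Ve_{b_{n+1}-1}=v_ne_{b_{\varphi(n)}}$ ($n\ge1$), all other images being $0$; both are bounded, $C$ being a weighted permutation and $\|V\|\le\sum_n|v_n|<\infty$. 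Then $C$ is a weighted cyclic permutation of the basis inside each block $[b_n,b_{n+1})$, hence invertible, while $V$ sends block $n$ into the strictly earlier block $\varphi(n)<n$. Consequently $C^{-1}V$ is locally nilpotent, each $(C^{-1}V)^m$ vanishing on a given basis vector once $m$ exceeds $m_n=\min\{m:\varphi^m(n)=0\}$, so the formal Neumann series $S=(I+C^{-1}V)^{-1}C^{-1}=\sum_{m\ge0}(-C^{-1}V)^mC^{-1}$ terminates on each basis vector, and unwinding it produces exactly the displayed formulas with coefficient $c_{n,m}:=\big(\prod_{l=0}^m v_{\varphi^l(n)}\big)\big(\prod_{l=0}^{m+1}W_{\varphi^l(n)}\big)$ attached to $e_{b_{\varphi^{m+1}(n)+1}-1}$.

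The crux, and the step I expect to be the main obstacle, is boundedness of $S$, because $S$ is not a weighted shift and couples infinitely many blocks: since every fibre $\varphi^{-1}(l)$ is infinite, the output $e_{b_{l+1}-1}$ receives contributions from infinitely many inputs $e_{b_n}$. I would split $S=C^{-1}+S_1$, where $C^{-1}$ is the weighted permutation $e_k\mapsto\frac1{w_k}e_{k-1}$, $e_{b_n}\mapsto-W_ne_{b_{n+1}-1}$, $e_0\mapsto-W_0e_{b_1-1}$, of norm $\max(\sup_k 1/|w_k|,\sup_n|W_n|)<\infty$ by hypothesis, and $S_1e_{b_n}=-\sum_{m=0}^{m_n-1}c_{n,m}e_{b_{\varphi^{m+1}(n)+1}-1}$ (with $S_1=0$ on all other basis vectors) collects the long sums. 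Setting $M=\sup_n|W_n|$ and using $|W_j|\le M$ together with the standing bound $|v_j|<1/(2^jM)$, I would estimate, since $\sum_{l=0}^m\varphi^l(n)\ge n$,
\[
|c_{n,m}|=\prod_{l=0}^m|v_{\varphi^l(n)}|\ \prod_{l=0}^{m+1}|W_{\varphi^l(n)}|<\frac{M^{m+2}}{M^{m+1}}\,2^{-\sum_{l=0}^m\varphi^l(n)}\le M\,2^{-n},
\]
uniformly in $m$. Then the matrix of $S_1$ (rows indexed by the outputs $e_{b_{j+1}-1}$, columns by the inputs $e_{b_n}$) has all column sums bounded by $\sum_{m=0}^{m_n-1}|c_{n,m}|\le n\,M\,2^{-n}$ (using $m_n\le n$) and all row sums bounded by $\sum_{n\ge1}M\,2^{-n}=M$, since for a fixed output $j=\varphi^{m+1}(n)$ each $n$ contributes through at most one $m$. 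Both suprema being finite, $S_1$ is bounded on $\ell^1$ and on $\ell^\infty$, hence on every $\ell^p$ ($1\le p<\infty$) by Riesz--Thorin interpolation. This is precisely where the quantitative hypothesis $|v_n|<1/(2^n\sup|W_n|)$ is essential: it forces the geometric decay in $n$ that makes both sums summable.

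Finally I would verify $TS=ST=I$ on the basis. The cases $e_k$ with $b_n<k<b_{n+1}$ and $e_0$ are immediate, e.g. $TSe_k=\frac1{w_k}Te_{k-1}=e_k$ and $TSe_0=-W_0Te_{b_1-1}=e_0$. The remaining case $TSe_{b_n}=e_{b_n}$ is a telescoping computation: applying $T$ to the last term $-W_ne_{b_{n+1}-1}$ yields $e_{b_n}$ together with a spurious $-v_nW_ne_{b_{\varphi(n)}}$, and for each $m$ the identity $Te_{b_{\varphi^{m+1}(n)+1}-1}=v_{\varphi^{m+1}(n)}e_{b_{\varphi^{m+2}(n)}}-W_{\varphi^{m+1}(n)}^{-1}e_{b_{\varphi^{m+1}(n)}}$ shows that the $m$-th summand cancels the residue left by the $(m-1)$-th one; the final summand ($m=m_n-1$) uses $\varphi^{m_n}(n)=0$ and $Te_{b_1-1}=-W_0^{-1}e_0$ to absorb the last residue, leaving exactly $e_{b_n}$. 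The identity $ST=I$ is checked symmetrically, its only non-trivial case $STe_{b_{n+1}-1}=v_nSe_{b_{\varphi(n)}}-W_n^{-1}Se_{b_n}$ telescoping in the same way. Since $T$ and $S$ are bounded and are mutually inverse on the dense linear span of the $e_k$, the identities hold globally, which proves that $T$ is invertible with $T^{-1}=S$ given by the stated formulas.
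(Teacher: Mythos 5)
Your proof is correct, but it takes a genuinely different route from the paper's. The paper argues indirectly: it first proves that $T_{v,w,\varphi,b}$ is \emph{injective} by a growth argument (a nonzero coordinate $x_{b_{n_0+1}-1}$ would, through the coupling equations and the hypothesis $|v_n|<1/(2^n\sup_m|W_m|)$, force a sequence of coordinates tending to infinity in modulus, contradicting $x\in\ell^p(\N)$); the displayed formulas for $T^{-1}$ on basis vectors are then simply \emph{checked}, uniqueness coming from injectivity; \emph{surjectivity} is obtained by showing that the partial preimages $(T^{-1}_{v,w,\varphi,b}P_{[0,b_{n+1}-1]}z)_n$ form a Cauchy sequence; and continuity of the inverse comes for free from the open mapping theorem. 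You instead build an explicit candidate $S$ (motivated by the factorization $T=C(I+C^{-1}V)$ with $C^{-1}V$ locally nilpotent, so the Neumann series terminates on each basis vector), prove that $S$ is bounded by a Schur-type test --- column sums controlled by $|c_{n,m}|\le M2^{-n}$ together with $m_n\le n$, row sums controlled by the fact that the strictly decreasing orbit $n>\varphi(n)>\varphi^2(n)>\cdots$ visits a given index $j$ at most once --- followed by Riesz--Thorin interpolation, and then verify algebraically that $TS=ST=I$ on the dense span of the basis, the two-sided identity replacing the paper's injectivity-plus-surjectivity scheme. Notably, the key quantitative estimate is the same in both proofs: your bound $|c_{n,m}|\le M2^{-n}$ is exactly the paper's bound $\prod_{s=0}^m|v_{\varphi^s(l)}|\prod_{s=0}^{m+1}|W_{\varphi^s(l)}|\le C/2^l$ used in its Cauchy-sequence estimate. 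What your approach buys is that it dispenses with the separate injectivity argument and with the open mapping theorem, and it is fully constructive, yielding an explicit upper bound on $\|T^{-1}\|$; the cost is the extra row-sum/interpolation step, which the paper avoids by invoking softer functional analysis at the expense of any norm control on the inverse.
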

\begin{proof}
We first prove that $T_{v,w,\varphi,b}$ is injective. Let $x\in \ell^p(\N)$. Assume that $T_{v,w,\varphi,b}\ x=0$.
For every $n\ge 0$, for every $k\in [b_{n},b_{n+1}-1)$, we then have $w_{k+1} x_k=0$ and thus $x_k=0$. On the other hand, for every $n\ge 0$, we have \[-W_n^{ -1 }x_{b_{n+1}-1}+\sum_{m\ge 1:\varphi(m)=n}v_m x_{b_{m+1}-1}=0.\]

Let $C=\sup_n |W_n|$. Assume that  there exists $n_0$ such that $|x_{b_{n_0+1}-1}|=\varepsilon>0$. Then we deduce that there exists $n_1>n_0$ such that $\varphi(n_1)=n_0$ and 
\[|x_{b_{n_1+1}-1}|\ge \frac{2^{n_0}}{|v_{n_1}|2^{n_1}}|W_{n_0}|^{-1}|x_{b_{n_0+1}-1}|\ge \frac{C^{-1}\varepsilon 2^{n_0}}{|v_{n_1}|2^{n_1}}.\] By repeating this argument, we get an increasing sequence $(n_k)$ such that \[|x_{b_{n_k+1}-1}|\ge \frac{C^{-k}\varepsilon 2^{n_0}}{\left(\prod_{j=1}^k|v_{n_j}|\right)2^{n_k}}>\frac{C^{-k}\varepsilon 2^{n_0}}{\left(\prod_{j=1}^k\frac{1}{C2^{n_j}}\right)2^{n_k}}\xrightarrow[k\to \infty]{} \infty.\]
This is then impossible than $x$  belongs to $\ell^p(\N)$ and we deduce that $T_{v,w,\varphi,b}x=0$ if and only if $x=0$.\\

Since the operator $T_{v,w,\varphi,b}$ is injective, we can now easily check that
\begin{itemize}
\item for every $n\geq 0$, every $k\in (b_{n},b_{n+1})$,
\[T^{-1}_{v,w,\varphi,b}\ e_k= \frac{1}{w_{k}}\, e_{k-1};\]
\item $T^{-1}_{v,w,\varphi,b}e_0= -W_0\ e_{b_1-1}$;
\item for every $n\ge 1$,
\[
T^{-1}_{v,w,\varphi,b}\ e_{b_n}=-\sum_{m=0}^{m_n-1} \Big(\prod_{l=0}^mv_{\varphi^l(n)}\Big)\,\Big(\prod_{l=0}^{m+1}W_{\varphi^{l}(n)}\Big) e_{b_{\varphi^{m+1}(n)+1}-1}-W_n \ e_{b_{n+1}-1}\]
where $m_n=\min\{m\ge 0:\varphi^m(n)=0\}$.
\end{itemize}

 It remains to show that $T_{v,w,\varphi,b}$ is surjective. Let $z\in \ell^p(\mathbb{N})$. It suffices to show that the sequence $(T_{v,w,\varphi,b}^{-1}P_{[0,b_{n+1}-1]}z)_n$ is a Cauchy sequence, where $P_{[i,j]}z=\sum_{k=i}^jz_ke_k$. We have for every $N\ge n$,
\begin{align*}
&\|T_{v,w,\varphi,b}^{-1}P_{[0,b_{N+1}-1]}z-T_{v,w,\varphi,b}^{-1}P_{[0,b_{n+1}-1]}z\|\\
&\quad=\Big\|\sum_{k=b_{n+1}}^{b_{N+1}-1}z_kT_{v,w,\varphi,b}^{-1}e_k\Big\|\\
&\quad\le \Big\|\sum_{l=n+1}^N\sum_{k=b_l+1}^{b_{l+1}-1}z_k\frac{1}{w_{k}}\, e_{k-1}\Big\|+\Big\|\sum_{l=n+1}^Nz_{b_l}T_{v,w,\varphi,b}^{-1}e_{b_l}\Big\|\\
&\quad\le \frac{1}{\inf_k |w_k|}\|P_{[b_{n+1},b_{N+1}-1]}z\|\\
&\quad\quad+ \sum_{l=n+1}^N|z_{b_l}|  \left(
\sum_{m=0}^{m_l-1} \Big(\prod_{s=0}^m|v_{\varphi^s(l)}|\Big)\,\Big(\prod_{s=0}^{m+1}|W_{\varphi^s(l)}|\Big)\right)+ \left\|\sum_{l=n+1}^NW_lz_{b_l}e_{b_{l+1}-1}\right\|\\
&\quad\le \frac{1}{\inf_k |w_k|} \|P_{[b_{n+1},b_{N+1}-1]}z\|+\|z\|_{\infty}\sum_{l=n+1}^N\sum_{m=0}^{m_l-1}\frac{C}{2^l}+C\|P_{[b_{n+1},b_{N+1}-1]}z\|\\
&\quad\le   \big(\frac{1}{\inf_k |w_k|}+C\big)\|P_{[b_{n+1},b_{N+1}-1]}z\|+\|z\|_{\infty}\sum_{l=n+1}^N  \frac{m_lC}{2^l}\\
&\quad\le  \big(\frac{1}{\inf_k |w_k|}+C\big)\|P_{[b_{n+1},b_{N+1}-1]}z\|+\|z\|_{\infty}\sum_{l=n+1}^N\frac{Cl}{2^l}
\end{align*}
since $m_l\le l$ for every $l\ge 1$. The sequence $(T_{v,w,\varphi,b}^{-1}P_{[0,b_{n+1}-1]}z)_n$ is thus a Cauchy sequence and we deduce that $T_{v,w,\varphi,b}$ is surjective. It now follows from the open mapping theorem that $T^{-1}_{v,w,\varphi,b}$ is continuous.
\end{proof}

The counterexample that we will construct will be an operator of C-type with a specific structure that is called operator of $C^{+}$-type.

\begin{definition}[{\cite[Definition 6.6]{Monster}}]
An operator of C-type $T_{v,w,\varphi,b}$ is said to be an operator of $C^{+}$-type if for every integer $k\ge 1$,
\begin{enumerate}
 \item[-] $\varphi (n)=n-2^{k-1}$ for every $n\in[2^{k-1},2^{k})$, so that 
 $\varphi ([2^{k-1},2^{k}))=[0,2^{k-1})$;
 \item[-] the blocks $[b_{n},b_{n+1})$ with $n\in[2^{k-1},2^{k})$,
all have the same size, which we denote by $\Delta ^{(k)}$: 
\[b_{n+1}-b_{n}=
\Delta ^{(k)}\qquad\hbox{for every $n\in[2^{k-1},2^{k})$};
\]
\item[-] the sequence $v$ is constant on the interval $[2^{k-1},
2^{k})$: there exists $v^{(k)}$ such that 
\[ v_{n}=v^{(k)}\qquad\hbox{for every 
$n\in[2^{k-1},2^{k})$};
\]
\item[-] the sequences of weights $(w_{b_{n}+i})_{1\le i
<\Delta ^{(k)}}$ are independent of $n\in[2^{k-1},2^{k})$: there exists a 
sequence $(w_{i}^{(k)})_{1\le i<\Delta ^{(k)}}$ such that 
\[
w_{b_{n}+i}=w_{i}^{(k)} \quad\hbox{for every $1\le i<\Delta ^{(k)}$ and every 
$n\in[2^{k-1},2^{k})$.}
\]
\end{enumerate}
\end{definition}

We will now consider that $T$ is an operator of C$^{+}$-type such that for every $k\ge 1$,
\[
v^{(k)}=2^{-\tau^{(k)}} \quad \text{and}\quad 
w^{(k)}_i=
\begin{cases}
  \frac{1}{2} & \quad\text{if}\ \ 1\le i\le \delta^{(k)}\\
 2 & \quad\text{if}\ \ \delta^{(k)}<i\le 2\delta^{(k)}\\
 1 & \quad\text{if}\ \  2\delta^{(k)}\le i <\Delta^{(k)}-2\delta^{(k)}-2\eta^{(k)}\\
 \frac{1}{2} & \quad\text{if}\ \ \Delta^{(k)}-2\delta^{(k)}-2\eta^{(k)}\le i<\Delta^{(k)}-\delta^{(k)}-2\eta^{(k)}\\
 1 & \quad\text{if}\ \ \Delta^{(k)}-\delta^{(k)}-2\eta^{(k)}\le i<\Delta^{(k)}-\delta^{(k)}-\eta^{(k)}\\
 2& \quad\text{if}\ \ \Delta^{(k)}-\delta^{(k)}-\eta^{(k)}\le i<\Delta^{(k)}-\eta^{(k)}\\
 1& \quad\text{if}\ \ \Delta^{(k)}-\eta^{(k)}\le i<\Delta^{(k)}\\ 
\end{cases}
\]
where $(\tau ^{(k)})_{k\ge 1}$, $(\delta ^{(k)})_{k\ge 1}$ and $(\eta^{(k)})_{k\ge 1}$ are three strictly 
increasing sequences of positive integers satisfying $4\delta^{(k)}+2\eta^{(k)}<\Delta 
^{(k)}$ for every $k\ge 1$.

Since $W_n=1$ for every $n\ge 0$,
it follows from Proposition~\ref{invertible} that $T$ is invertible if 
\begin{equation}
\tau^{(k)}\ge 2^{k}
\label{tau}
\end{equation}
since we then get that for every $n\in [2^{k-1},2^k)$, 
$v_n = 2^{-\tau^{(k)}}\le 2^{-2^k}<\frac{1}{2^n}$.

We will discuss in the following sections our choice of weights $(w_{i}^{(k)})_{1\le i<\Delta ^{(k)}}$ and we will see that under some conditions on the parameters $\tau^{(k)}$, $\delta^{(k)}$, $\eta^{(k)}$ and $\Delta^{(k)}$, the operator $T$ is not $\mathcal{U}$-frequently hypercyclic whereas its inverse is $\mathcal{U}$-frequently hypercyclic.

\section{The inverse of $T$ is $\mathcal{U}$-frequently hypercyclic}

In \cite{Monster}, a criterion for $\mathcal{U}$-frequent hypercyclicity based on the study of periodic points has been given. We will use this one to determine under which conditions the inverse of $T$ is $\mathcal{U}$-frequently hypercyclic.

\begin{theorem}[{\cite[Theorem 5.14]{Monster}}]\label{Theorem 39} Let $T\in\mathfrak B(X)$. Assume that there exist a dense linear subspace 
$X_0$ of $X$ with $T(X_0)\subseteq X_0$ and $X_0\subseteq \emph{Per}(T)$, and a constant
$\alpha\in (0,1)$ such that the following property holds true: for every $x\in X_{0}$ and every $\varepsilon >0$, there exist 
$z\in X_{0}$ and $n\ge 1$ such that 
\begin{enumerate}
 \item [\emph{(1)}] $\|z\|<\varepsilon $;
 \item [\emph{(2)}] $\|T^{n+k}z-T^{k}x\|<\varepsilon $ for every
 $0\le k\le n\alpha $.
\end{enumerate}
Then $T$ is chaotic and $\mathcal{U}$-frequently hypercyclic.
\end{theorem}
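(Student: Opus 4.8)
The plan is to prove the stronger assertion, $\mathcal{U}$-frequent hypercyclicity, and read off chaos for free: the hypotheses already provide a dense set of periodic points (namely $X_0\subseteq\text{Per}(T)$), and $\mathcal{U}$-frequent hypercyclicity forces $\mathcal N_T(x,U)$ to be infinite for every non-empty open $U$, hence hypercyclicity. So I would fix a dense sequence $(y_j)_{j\ge 1}$ in $X_0$, a summable sequence $\varepsilon_l\downarrow 0$, and build a single vector $x=\sum_{l\ge 1}z_l$ as a norm-convergent series, where each $z_l$ is produced by applying the hypothesis to a target $y_{i_l}$ along an enumeration $(i_l)$ that visits every index infinitely often. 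Condition (1) guarantees $\sum_l\|z_l\|<\infty$ once the precisions are chosen summable, so $x$ is well defined.

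The engine producing positive \emph{upper} density is the periodicity of the targets. If $y=y_{i_l}$ has period $p$, then condition (2) gives $\|T^{n_l+k}z_l-T^k y\|<\varepsilon_l$ for $0\le k\le n_l\alpha$; restricting to $k\equiv 0\pmod p$ yields $T^m z_l\approx y$ at the $\approx n_l\alpha/p$ times $m=n_l+k$ lying in $[n_l,n_l(1+\alpha)]$. Evaluating the density ratio at $N=n_l(1+\alpha)$ gives at least $\frac{\alpha}{(1+\alpha)p}-o(1)$, so letting $l\to\infty$ through the stages addressing this fixed target forces $\overline{\text{dens}}\,\mathcal N_T(x,U)\ge\frac{\alpha}{(1+\alpha)p}>0$ for any neighbourhood $U$ of $y$. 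This is exactly why one obtains only \emph{upper} density, and why the constant $\alpha$ must enter.

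For this to survive the passage from $z_l$ to $x=\sum_{l'}z_{l'}$, I must control the cross terms $\sum_{l'\ne l}T^m z_{l'}$ at the good times $m\in[n_l,n_l(1+\alpha)]$. The future stages $l'>l$ are the easy ones: at stage $l'$ every earlier window has already been fixed, so for $m$ bounded by the (already determined) largest window-end $n_{l'-1}(1+\alpha)$ the crude estimate $\|T^m z_{l'}\|\le\|T\|^{m}\|z_{l'}\|$ applies, and choosing $\varepsilon_{l'}$ small enough relative to $\|T\|^{\,n_{l'-1}(1+\alpha)}$ pushes the total future contribution below $2^{-l}$. The delicate part, which I expect to be the main obstacle, is the past stages $l'<l$: each $z_{l'}$ is periodic and its orbit does \emph{not} decay, so its ``plateau'', where $T^m z_{l'}$ has size $\approx\|y_{i_{l'}}\|$, recurs forever.

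The way I would handle this is to position the times $n_l$ as common multiples of the periods of $z_1,\dots,z_{l-1}$ and then to \emph{select}, inside $[n_l,n_l(1+\alpha)]$, the return times that avoid the recurring plateaus of the earlier perturbations, keeping only a definite fraction of the $\approx n_l\alpha/p$ candidates. Making this quantitative requires arranging the parameters so that the accumulated ``large-orbit fraction'' $\sum_{l'}f_{l'}$ of the earlier perturbations stays below $\tfrac12$, which constrains how strongly the periods must dominate the window lengths $n_{l'}\alpha$; a union bound over the $l-1$ earlier perturbations then shows that at least half of the return times remain uncontaminated, so the surviving near-$y$ times still carry upper density $\ge\frac{\alpha}{2(1+\alpha)p}$. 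Getting these balances simultaneously compatible for all stages and all targets is the technical heart of the argument; once it is in place, $x=\sum_l z_l$ is $\mathcal{U}$-frequently hypercyclic, and together with the dense periodic subspace $X_0$ this yields that $T$ is chaotic.
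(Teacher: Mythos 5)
Note first that the paper does not prove this statement at all: it is quoted verbatim from \cite[Theorem 5.14]{Monster}, so your proposal can only be measured against the argument that actually establishes that result. Your overall architecture (a series $x=\sum_l z_l$ over an enumeration hitting every target infinitely often, the count of $\approx \alpha n_l/p$ return times $k\equiv 0 \pmod p$ in the window $[n_l,(1+\alpha)n_l]$ giving the ratio $\frac{\alpha}{(1+\alpha)p}$, and the treatment of \emph{future} stages by shrinking $\varepsilon_{l'}$ against $\|T\|^{(1+\alpha)n_{l'-1}}$) is sound and is indeed how such constructions start. You also correctly identify the crux: the earlier perturbations are periodic, so their orbits never decay. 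But your resolution of that crux does not work, for two concrete reasons. First, you propose to ``position the times $n_l$ as common multiples of the periods of $z_1,\dots,z_{l-1}$''. You cannot: $n_l$ is \emph{given} by the hypothesis, not chosen. By shrinking $\varepsilon$ one can force $n_l$ to be large (if $n$ stayed bounded as $\varepsilon\to 0$ then $z\to 0$ and $T^n z\to x$ would force $x=0$), but one has no control whatsoever on $n_l$ modulo anything; an adversarial choice of $n_l$ defeats any congruence requirement. Second, the ``avoid the plateaus plus union bound'' step has no basis: the hypothesis controls $T^m z_{l'}$ only for small $m$ (via $\|T\|^m\|z_{l'}\|$) and on the window $[n_{l'},(1+\alpha)n_{l'}]$; outside these ranges, within one period of $z_{l'}$, the vectors $T^m z_{l'}$ can be arbitrarily large on a set of residues of density arbitrarily close to $1$. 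So the ``large-orbit fraction'' $f_{l'}$ of even a single earlier perturbation cannot be bounded by any choice of parameters, and no selection of return times survives. Moreover, even where the past contributions are merely ``not large'' this is useless: at a return time one needs $\sum_{l'<l}T^m z_{l'}$ to be smaller than the approximation error, i.e.\ essentially to vanish.

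The proof that works \emph{cancels} the past instead of avoiding it, and this requires two ideas absent from your proposal. Writing $s_{l-1}=\sum_{l'<l}z_{l'}$ (periodic, since $X_0\subseteq \mathrm{Per}(T)$ is a subspace), one applies the hypothesis not to $y_{i_l}$ but to the \emph{difference} $g_l:=y_{i_l}-s_{l-1}\in X_0$, obtaining $(z_l,n_l)$. This alone is not enough, because one would still need $T^{n_l+k}s_{l-1}=s_{l-1}$ at the return times, i.e.\ a congruence on the uncontrollable $n_l$ --- the same obstruction as above. The second idea removes it: replace $z_l$ by the rotated perturbation $\pi_l:=T^{r_l}z_l$, where $r_l\equiv n_l \pmod{\mathrm{per}(g_l)}$ and $0\le r_l<\mathrm{per}(g_l)$ (note $\|\pi_l\|\le \|T\|^{\mathrm{per}(g_l)}\|z_l\|$ is still controllable, since $\mathrm{per}(g_l)$ is known \emph{before} the hypothesis is invoked). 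Then for every $m$ with $n_l\le m+r_l\le (1+\alpha)n_l$ one has $T^m\pi_l=T^{m+r_l}z_l\approx T^{m+r_l-n_l}g_l=T^m g_l$, because $m+r_l-n_l\equiv m \pmod{\mathrm{per}(g_l)}$; hence $T^m(s_{l-1}+\pi_l)\approx T^m s_{l-1}+T^m g_l=T^m y_{i_l}$ throughout a window of length $\alpha n_l$. The partial sum thus shadows the \emph{entire periodic orbit} of the target, and the visits to $B(y_{i_l},\rho)$ at times $m\equiv 0\pmod{\mathrm{per}(y_{i_l})}$ have density $\ge \frac{\alpha}{(1+\alpha)\,\mathrm{per}(y_{i_l})}-o(1)$, independent of the accumulated past --- which is essential, since otherwise the densities achieved at later stages for a fixed target would degrade to $0$ with the growing period of $s_{l-1}$, and the upper density could not be bounded below. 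Without the difference targets and this rotation trick (which exploits the equivariance: if $(z,n)$ works for $x$ then $(Tz,n)$ works for $Tx$), your scheme cannot be repaired into a proof.
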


We will apply this theorem to $T^{-1}$. We recall that every finite sequence is a periodic point of $T$ and thus a periodic point of $T^{-1}$. Moreover, it follows from Proposition~\ref{invertible} that
\[
T^{-1}_{v,w,\varphi,b}\ e_j=
\begin{cases}
 \frac{1}{w_{j}}\, e_{j-1} & \textrm{if}\ j\in (b_{n},b_{n+1}),\\
-\sum_{m=0}^{m_n-1} \Big(\prod_{l=0}^mv_{\varphi^l(n)}\Big)\, e_{b_{\varphi^{m+1}(n)+1}-1}-e_{
b_{n+1}-1} & \textrm{if}\ j=b_{n},\ n\ge 1,\\
 -e_{b_1-1}& \textrm{if}\ 
j=0
\end{cases}
\]
where if $j=b_n+i$ with $n\in [2^{k-1},2^k)$ and $i\in [1,\Delta^{(k)})$,
\[
\frac{1}{w_j}=
\begin{cases}
  2 & \quad\text{if}\ \ 1\le i\le \delta^{(k)}\\
 \frac{1}{2} & \quad\text{if}\ \ \delta^{(k)}<i\le 2\delta^{(k)}\\
 1 & \quad\text{if}\ \  2\delta^{(k)}\le i <\Delta^{(k)}-2\delta^{(k)}-2\eta^{(k)}\\
 2 & \quad\text{if}\ \ \Delta^{(k)}-2\delta^{(k)}-2\eta^{(k)}\le i<\Delta^{(k)}-\delta^{(k)}-2\eta^{(k)}\\
 1 & \quad\text{if}\ \ \Delta^{(k)}-\delta^{(k)}-2\eta^{(k)}\le i<\Delta^{(k)}-\delta^{(k)}-\eta^{(k)}\\
 \frac{1}{2}& \quad\text{if}\ \ \Delta^{(k)}-\delta^{(k)}-\eta^{(k)}\le i<\Delta^{(k)}-\eta^{(k)}\\
 1& \quad\text{if}\ \ \Delta^{(k)}-\eta^{(k)}\le i<\Delta^{(k)}\\ 
\end{cases}
\]

Let $n\in [2^{k-1},2^k)$. Roughly speaking, the first block of $2$ (for $1\le i\le \delta^{(k)}$) will allow that the action of $T^{-1}$ on the coordinate $e_{b_n+\delta^{(k)}}$ brings big coordinates in the blocks $[b_{\varphi^m(n)},b_{\varphi^m(n)+1})$, at least if $\delta^{(k)}$ is bigger than $\tau^{(k)}$. However, this approximation will be only acceptable when the effect of the coordinate $e_{b_n+\delta^{(k)}}$ in the block $[b_{n},b_{n+1})$ will be sufficiently small. This will happen after $\delta^{(k)}+\eta^{(k)}$ additional iterates thanks to the block of $1/2$ for $\Delta^{(k)}-\delta^{(k)}-\eta^{(k)}\le i<\Delta^{(k)}-\eta^{(k)}$ and will last during $\eta^{(k)}$ additional iterates. In other words, when we will apply Theorem~\ref{Theorem 39} to $T^{-1}$, the integer $n$ will be of the order of $2\delta^{(k)}+\eta^{(k)}$ and the good approximation will last during a sufficiently long time if $\eta^{(k)}$ is not too small compared to $\delta^{(k)}$. 

\begin{prop}\label{Prop1}
If $\lim \delta^{(k)}-\tau^{(k)}=\infty$ and if $\liminf_k\frac{\delta^{(k)}}{\eta^{(k)}}<\infty$, then $T^{-1}$ is $\mathcal{U}$-frequently hypercyclic.
\end{prop}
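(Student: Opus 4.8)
The plan is to verify the hypotheses of Theorem~\ref{Theorem 39} for the operator $T^{-1}$, taking $X_0$ to be the space $c_{00}$ of finitely supported sequences. This space is dense in $\ell^p(\N)$, satisfies $T^{-1}(X_0)\subseteq X_0$ by the explicit formula for $T^{-1}e_j$ recalled just before the statement, and is contained in $\mathrm{Per}(T^{-1})$, since every $e_j$ with $j\in[b_m,b_{m+1})$ satisfies $T^{2(b_{m+1}-b_m)}e_j=e_j$ and $T^{-1}$ has the same periodic points as $T$. It then remains to produce, for each $x\in X_0$ and each $\varepsilon>0$, a vector $z\in X_0$ and an integer $n\ge 1$ with $\|z\|<\varepsilon$ and $\|T^{-(n+k)}z-T^{-k}x\|<\varepsilon$ for all $0\le k\le\alpha n$, where $\alpha\in(0,1)$ is fixed in advance. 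Using $\liminf_k \delta^{(k)}/\eta^{(k)}<\infty$, I first fix $M<\infty$ and an infinite set $K$ of indices with $\delta^{(k)}/\eta^{(k)}\le M$ for $k\in K$, and set $\alpha:=\frac{1}{2M+2}\in(0,1)$; all parameters $k$ used below are taken large and in $K$.

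The construction of $z$ rests on the coupling relation $T^{-1}e_{b_n}=-\sum_{m=0}^{m_n-1}\big(\prod_{l=0}^m v_{\varphi^l(n)}\big)e_{b_{\varphi^{m+1}(n)+1}-1}-e_{b_{n+1}-1}$, together with the fact that on the bottom segment $1\le i\le\delta^{(k)}$ of a block $[b_n,b_{n+1})$ with $n\in[2^{k-1},2^k)$ one has $1/w_{b_n+i}=2$. Indeed $T^{-\delta^{(k)}}e_{b_n+\delta^{(k)}}=2^{\delta^{(k)}}e_{b_n}$, and one further application spreads this mass, through the coupling, onto the top coordinate $e_{b_{\varphi(n)+1}-1}$ of the lower block $\varphi(n)$ with amplitude $2^{\delta^{(k)}}v_n=2^{\delta^{(k)}-\tau^{(k)}}$, leaving a residual of amplitude $2^{\delta^{(k)}}$ at the top coordinate $e_{b_{n+1}-1}$ of block $n$. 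I would take $z$ to be a superposition of coordinates $e_{b_{n_\ell}+\delta^{(k)}+a}$, the blocks $n_\ell\in[2^{k-1},2^k)$ being chosen so that the $\varphi(n_\ell)$ run over the finitely many blocks meeting $\mathrm{supp}(x)$, and the small offsets $a$ and scalar coefficients being tuned so that, after $\delta^{(k)}+1$ steps, the deposited copies reproduce $x$ up to $\varepsilon$ once they have been carried by the intrablock shift dynamics to the correct positions (here the periodicity of finite vectors fixes the phases). The hypothesis $\lim_k(\delta^{(k)}-\tau^{(k)})=\infty$ enters precisely here: normalising so that the copy of $x$ has amplitude of order $1$ forces the coefficients of $z$ to be of order $2^{\tau^{(k)}-\delta^{(k)}}\to 0$, whence $\|z\|<\varepsilon$, and it simultaneously forces each residual, once shrunk, to be of this same small order.

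Then I would take $n$ of the order of $2\delta^{(k)}+\eta^{(k)}$, chosen so that at time $n$ the residual at the top of block $n_\ell$ has been pushed down through the top region of the block: it first crosses the segment with $1/w=1$ of length $\eta^{(k)}$ (amplitude unchanged), then the segment with $1/w=1/2$ of length $\delta^{(k)}$, which divides its amplitude by $2^{\delta^{(k)}}$ and brings it from $2^{\delta^{(k)}}$ down to order $2^{\tau^{(k)}-\delta^{(k)}}<\varepsilon$. After this the residual stays small for the next $\eta^{(k)}$ iterates, while it crosses the following segment with $1/w=1$, before a segment with $1/w=2$ would amplify it again. This yields a good window of length about $\eta^{(k)}$, so the approximation persists for $0\le k\le\alpha n$ provided $\alpha(2\delta^{(k)}+\eta^{(k)})\lesssim\eta^{(k)}$; for $k\in K$ one has $2\delta^{(k)}+\eta^{(k)}\le(2M+1)\eta^{(k)}$, so the choice $\alpha=\frac{1}{2M+2}$ gives $\alpha(2\delta^{(k)}+\eta^{(k)})<\eta^{(k)}$. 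This is exactly why a single constant $\alpha$ works, and why $\liminf_k\delta^{(k)}/\eta^{(k)}<\infty$ is required.

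Since $T^{-(n+k)}z=T^{-k}(T^{-n}z)$ and $T^{-n}z=x+E$ with $\|E\|<\varepsilon$, the quantity to control on the whole window is $\|T^{-k}E\|<\varepsilon$ for $0\le k\le\alpha n$, the copy-part evolving to $T^{-k}x$ automatically. The main obstacle is therefore the uniform control of $E$ — essentially the residual, together with the negligible lower-order deposits on the blocks $\varphi^{m}(n)$ with $m\ge 2$ — throughout this window: one must check that the residual never re-enters an amplifying region $1/w=2$ before time $n(1+\alpha)$, and that the contributions of the several superposed terms and offsets sum to at most $\varepsilon$. Making the bookkeeping of positions, phases and amplitudes precise, so that the deposited copies genuinely reproduce $x$ and the total error stays below $\varepsilon$ for every $k\le\alpha n$ simultaneously, is where the bulk of the quantitative work lies.
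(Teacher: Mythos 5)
Your overall strategy is the same as the paper's: apply Theorem~\ref{Theorem 39} to $T^{-1}$ with $X_0=c_{00}$, build $z$ supported near position $\delta^{(k_1)}$ inside high-level blocks $n_\ell\in[2^{k_1-1},2^{k_1})$ with $\varphi(n_\ell)$ covering the support of $x$, take $n$ of order $2\delta^{(k_1)}+\eta^{(k_1)}$ so that the copy is reproduced by periodicity while the residual at the top of the high blocks sits in the contracted window of length $\eta^{(k_1)}$, and extract $\alpha$ from $\liminf_k\delta^{(k)}/\eta^{(k)}<\infty$. Your account of the residual dynamics and of the role of each hypothesis is accurate. But there is a genuine gap, and it is not merely the "bookkeeping" you defer at the end: it is your claim that the deposits on the blocks $\varphi^{m}(n_\ell)$ with $m\ge 2$ are \emph{negligible}. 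They are not. When the mass at $e_{b_{n_\ell}}$ couples downward, the deposit on block $l=\varphi(n_\ell)$ (the copy) carries the factor $\prod_{i=j+1}^{b_{l+1}-1}w_i$, and the deposit on block $\varphi(l)$ carries the additional factor $v_l=2^{-\tau^{(\tilde k)}}$ only. Since the weight product $\prod_{i=j+1}^{b_{l+1}-1}w_i$ equals $2^{\delta^{(\tilde k)}}$ when $j=b_l+\delta^{(\tilde k)}$ (and is of that order for $j$ in the lower part of block $l$), after normalising the copy to amplitude $|x_j|$ the deposit on block $\varphi(l)$ has amplitude of order $2^{\delta^{(\tilde k)}-\tau^{(\tilde k)}}|x_j|$, where $\tilde k$ is a level on which $x$ is supported. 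The very hypothesis $\lim_k(\delta^{(k)}-\tau^{(k)})=\infty$ that makes $\|z\|$ small therefore forces these secondary deposits to be \emph{arbitrarily large} compared with $\|x\|$; worse, they land precisely on lower blocks where $x$ lives and where your other copies are being produced, so they destroy the approximation $T^{-n}z\approx x$ rather than perturb it by $\varepsilon$.

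This is exactly the difficulty the paper's proof is organised around. There, the construction is run level by level: one first treats the top group $R_{k_0}(x)$, obtaining $T^{-n}z^0=u^0+y^0+(\text{small residual})$ where the deposit $y^0$ is supported on strictly lower blocks and satisfies only $\|y^0\|\le C\|u^0\|$ with $C$ huge (depending on $x$'s support level, not on $k_1$); then at step $k$ the target is corrected to $u^k=R_{k_0-k}(x)-\sum_{s<k}R_{k_0-k}y^s$, so that the new copy cancels all previously created deposits, and the telescoping identity $\sum_k u^k=x-\sum_s y^s$ yields $T^{-n}z=x+(\text{small terms})$ for $z=\sum_k z^k$. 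The norms $\|u^k\|$ grow at most like $\big(\sum_{s\le k}(k+1)^sC^s\big)\|x\|$, which is then absorbed by choosing the scale $c$ (equivalently $k_1$) small enough in \eqref{c} and \eqref{k1}. This downward induction with cancellation, which terminates because $\varphi$ strictly decreases levels and $y^{k_0}=0$, is the essential mechanism missing from your argument; without it (or some equivalent device) the construction as you sketch it cannot yield $\|T^{-n-m}z-T^{-m}x\|<\varepsilon$, even for $m=0$.
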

\begin{proof}
Let $X_{0}$ be the set of finite sequences, $x\in X_{0}$ and
 $\varepsilon >0$. We choose $k_0\geq1$ such that $x$ may be written as 
\[x=\sum_{l<2^{k_0}}\sum_{j=b_l}^{b_{l+1}-1}
x_{j}e_{j}.\]
Let $A>\max\{1,\liminf_k\frac{\delta^{(k)}}{\eta^{(k)}}\}$ and \[C=\sum_{l<2^{k_0}} 
\sum_{j=b_l}^{b_{l+1}-1} \sum_{m= 0}^{m_l-1} \Big(\prod_{s=0}^{m} v_{\varphi^s(l)}\Big)\, \!\!\!\!\!\!\!\!\!\;\;\;\;\;\;2^{\delta^{(k_0)}}  \|T^{-1}\|^{b_{l+1}-j-1}.\] We consider $c>0$ such that 
\begin{equation}
\label{c}
c\|x\|\sum_{s=0}^{k_0}(k_0+1)^{s+1}C^s<\varepsilon
\end{equation}
and $k_1> k_0$ such that $2\Delta^{(k_0)}<\frac{1}{6}\eta^{(k_1)}$, $\Delta^{(k_0)}<\delta^{(k_1)}$, $\delta^{(k_1)}/\eta^{(k_1)}<A$,
\begin{equation}
\label{k1}
\frac{2^{\Delta^{(k_0)}+\delta^{(k_0)}}}{2^{\delta^{(k_1)}-\tau^{(k_1)}}}<c\quad \text{and}\quad \left(\sum_{s=0}^{k_0}(k_0+1)^sC^{s}\right)\|x\|
\frac{2^{\delta^{(k_0)}}b_{2^{k_0}}}{2^{\delta^{(k_1)}-\tau^{(k_1)}}}<\varepsilon.
\end{equation}
We then let $n=\delta^{(k_1)}+2N\Delta^{(k_0)}$ where $N$ is the positive integer satisfying $\eta^{(k_1)}+\delta^{(k_1)}<2N\Delta^{(k_0)}\le \eta^{(k_1)}+\delta^{(k_1)}+2\Delta^{(k_0)}$ and we show that there exists $z\in X_0$ with $\|z\|<\varepsilon$ such that $\|T^{-n-m}z-T^{-m}x\|<\varepsilon$ for every $0\le m\le \frac{1}{6A}n$. It will then follow from Theorem~\ref{Theorem 39} that $T^{-1}$ is $\mathcal{U}$-frequently hypercyclic.\\

We first prove that for every $\tilde{k}\le k_0$, every $\tilde{x}=\sum_{l=2^{\tilde{k}-1}}^{2^{\tilde{k}}-1}\sum_{j=b_l}^{b_{l+1}-1}
\tilde{x}_{j}e_{j}$ (where, in the case of $\tilde{k}=0$, the sum $\sum_{l=2^{\tilde{k}-1}}^{2^{\tilde{k}}-1}$ means $\sum_{l=0}^{0}$), there exists $z$ such that $\|z\|\le c \|\tilde{x}\|$ and 
\begin{align*}
T^{-n}z&=\tilde{x}+\tilde{y}+\left[\sum_{l=2^{\tilde{k}-1}}^{2^{\tilde{k}}-1}\ 
\sum_{j=b_l}^{b_{l+1}-1}
\tilde{x}_{j}\
({v^{(k_1)}})^{-1}\!\!\!\!\!\!\!\!\!\;\;\;\;\;\;
\Bigl(\,\prod_{i=j+1}^{b_{l+1}-1}w_{i} \Bigr)\right.\\
&\quad\quad\quad
\left.\Bigl(\,\prod_{i=\Delta^{(k_1)}-b_{l+1}+j+1-2N\Delta^{(k_0)}}^{\Delta^{(k_1)}-1}w^{(k_1)}_{i} \Bigr)^{-1}
\ e_{b_{2^{k_1-1}+l+1}+j-b_{l+1}-2N\Delta^{(k_0)}}\right]
\end{align*} with $\|\tilde{y}\|\le C\|\tilde{x}\|$ and $\tilde{y}\in \text{span}\{e_j: 0\le j< b_{2^{\tilde{k}-1}}\}$ if $\tilde{k}\ge 1$ and $\tilde{y}=0$ if $\tilde{k}=0$.

To this end, we set
\[
z=-\sum_{l=2^{\tilde{k}-1}}^{2^{\tilde{k}}-1}\ 
\sum_{j=b_l}^{b_{l+1}-1}
\tilde{x}_{j}\
({v^{(k_1)}})^{-1}\!\!\!\!\!\!\!\!\!\;\;\;\;\;\;\Bigl(\prod_ {
i=1}^{\delta^{(k_1)}+j-b_{l+1}}w^{(k_1)}_{i}\Bigr)
\Bigl(\,\prod_{i=j+1}^{b_{l+1}-1}w_{i} \Bigr)
\ e_{b_{2^{k_1-1}+l}+\delta^{(k_1)}+j-{b_{l+1}}},
\]
which is well-defined since $b_{l+1}-j\le\Delta^{(\tilde{k})}\le\Delta^{(k_0)}<\delta^{(k_1)}$.
In view of our choice of weights $w$, it follows from \eqref{k1} that
\begin{align*}
\|z\|\le \|\tilde{x}\| 2^{\tau^{(k_1)}} 2^{-\delta^{(k_1)}+\Delta^{(\tilde{k})}}2^{\delta^{(\tilde{k})}}\le \frac{2^{\Delta^{(k_0)}+\delta^{(k_0)}}}{2^{\delta^{(k_1)}-\tau^{(k_1)}}}\|\tilde{x}\| \le c \|\tilde{x}\| 
\end{align*}
and by definition of $T^{-1}$, we have
\begin{align*}
T^{-\delta^{(k_1)}}z&=\tilde{x}+\tilde{y}+\left[\sum_{l=2^{\tilde{k}-1}}^{2^{\tilde{k}}-1}\ 
\sum_{j=b_l}^{b_{l+1}-1}
\tilde{x}_{j}\
({v^{(k_1)}})^{-1}\!\!\!\!\!\!\!\!\!\;\;\;\;\;\;
\Bigl(\,\prod_{i=j+1}^{b_{l+1}-1}w_{i} \Bigr)\right.\\
&\quad\quad\quad\quad\quad\quad \left.
\Bigl(\,\prod_{i=\Delta^{(k_1)}-b_{l+1}+j+1}^{\Delta^{(k_1)}-1}w^{(k_1)}_{i} \Bigr)^{-1}
\ e_{b_{2^{k_1-1}+l+1}+j-b_{l+1}}\right]
\end{align*}
where if $\tilde{k}=0$, we have $\tilde{y}=0$ and if  $\tilde{k}\ge 1$, we have
\begin{align*}
\tilde{y}&=\sum_{l=2^{\tilde{k}-1}}^{2^{\tilde{k}}-1}\ 
\sum_{j=b_l}^{b_{l+1}-1} \sum_{m=1}^{m_{2^{k_1-1}+l}-1} \left[\tilde{x}_{j}\Big(\prod_{s=1}^m v_{\varphi^s(2^{k_1-1}+l)}\Big)\right.\\
&\quad\quad\quad\quad\quad\quad \quad\quad\quad \quad\quad\quad \quad
\left.\Bigl(\,\prod_{i=j+1}^{b_{l+1}-1}w_{i} \Bigr)\ T^{-b_{l+1}+j+1}e_{b_{\varphi^{m+1}(2^{k_1-1}+l)+1}-1}\right]\\
&=\sum_{l=2^{\tilde{k}-1}}^{2^{\tilde{k}}-1}\ 
\sum_{j=b_l}^{b_{l+1}-1} \sum_{m=1}^{m_{2^{k_1-1}+l}-1} \tilde{x}_{j}\Big(\prod_{s=0}^{m-1} v_{\varphi^s(l)}\Big)
\Bigl(\,\prod_{i=j+1}^{b_{l+1}-1}w_{i} \Bigr)\ T^{-b_{l+1}+j+1}e_{b_{\varphi^{m}(l)+1}-1}\\
&=\sum_{l=2^{\tilde{k}-1}}^{2^{\tilde{k}}-1}\ 
\sum_{j=b_l}^{b_{l+1}-1} \sum_{m=0}^{m_{l}-1} \tilde{x}_{j}\Big(\prod_{s=0}^{m} v_{\varphi^s(l)}\Big)
\Bigl(\,\prod_{i=j+1}^{b_{l+1}-1}w_{i} \Bigr)\ T^{-b_{l+1}+j+1}e_{b_{\varphi^{m+1}(l)+1}-1}.
\end{align*}

Therefore, since $n=\delta^{(k_1)}+2N\Delta^{(k_0)}$, by using the periodicity of finite sequences, we have
\begin{align*}
T^{-n}z&=\tilde{x}+\tilde{y}+\left[\sum_{l=2^{\tilde{k}-1}}^{2^{\tilde{k}}-1}\ 
\sum_{j=b_l}^{b_{l+1}-1}
\tilde{x}_{j}\
({v^{(k_1)}})^{-1}\Bigl(\,\prod_{i=j+1}^{b_{l+1}-1}w_{i} \Bigr)\right.\\
&\quad\quad\left.
\Bigl(\,\prod_{i=\Delta^{(k_1)}-b_{l+1}+j+1-2N\Delta^{(k_0)}}^{\Delta^{(k_1)}-1}w^{(k_1)}_{i} \Bigr)^{-1}
\ e_{b_{2^{k_1-1}+l+1}+j-b_{l+1}-2N\Delta^{(k_0)}}\right]
\end{align*}
and by definition of $C$,
\begin{align*}
\|\tilde{y}\|&\le \|\tilde{x}\|\sum_{l=2^{\tilde{k}-1}}^{2^{\tilde{k}}-1}\ 
\sum_{j=b_l}^{b_{l+1}-1} \sum_{m=0}^{m_l-1} \Big(\prod_{s=0}^m v_{\varphi^s(l)}\Big)\, \!\!\!\!\!\!\!\!\!\;\;\;\;\;\;2^{\delta^{(\tilde{k})}}  \|T^{-1}\|^{b_{l+1}-j-1}\\
&\le \|\tilde{x}\|\sum_{l=0}^{2^{k_0}-1}\ 
\sum_{j=b_l}^{b_{l+1}-1} \sum_{m=0}^{m_l-1} \Big(\prod_{s=0}^{m} v_{\varphi^s(l)}\Big)\, \!\!\!\!\!\!\!\!\!\;\;\;\;\;\;2^{\delta^{(k_0)}}  \|T^{-1}\|^{b_{l+1}-j-1}=C\|\tilde{x}\|.
\end{align*}

We are now able to construct a vector $z$ such that $\|z\|<\varepsilon$ and such that $\|T^{-n-m}z-T^{-m}x\|<\varepsilon$ for every $0\le m\le \frac{1}{6A}n$.

Let $u^0=R_{k_0}(x)$ where $R_k(x)=\sum_{l=2^{k-1}}^{2^k-1}\sum_{j=b_l}^{b_{l+1}-1}x_je_j$. We know that there exist $z^0$ and $y^0$ such that
$\|z^0\|\le c \|u^0\|$, such that 
\begin{align*}
T^{-n}z^0&=u^0+y^0\\
&\quad+\sum_{l=2^{k_0-1}}^{2^{k_0}-1}\ 
\sum_{j=b_l}^{b_{l+1}-1}
u^0_{j}\
({v^{(k_1)}})^{-1}\!\!\!\!\!\!\!\!\!\;\;\;\;\;\;
\Bigl(\,\prod_{i=j+1}^{b_{l+1}-1}w_{i} \Bigr)\\
&\quad\quad
\Bigl(\,\prod_{i=\Delta^{(k_1)}-b_{l+1}+j+1-2N\Delta^{(k_0)}}^{\Delta^{(k_1)}-1}w^{(k_1)}_{i} \Bigr)^{-1}
\ e_{b_{2^{k_1-1}+l+1}+j-b_{l+1}-2N\Delta^{(k_0)}}
\end{align*}
and such that $\|y^{0}\|\le C\|u^0\|$ and $y^{0}\in  \text{span}\{e_j: 0\le j< b_{2^{k_0-1}}\}$. By repeating, this argument, we can obtain two families $(z^k)_{0\le k\le k_0}$ and $(y^k)_{0\le k\le k_0}$ such that if we let $u^k=R_{k_0-k}(x)-\sum_{s=0}^{k-1}R_{k_0-k}y^s$ then
$\|z^k\|\le c \|u^k\|$ and
\begin{align*}
T^{-n}z^k&=u^k+y^{k}\\
&\quad+\sum_{l=2^{k_0-k-1}}^{2^{k_0-k}-1}\ 
\sum_{j=b_l}^{b_{l+1}-1}
u^k_{j}\
({v^{(k_1)}})^{-1}\!\!\!\!\!\!\!\!\!\;\;\;\;\;\;
\Bigl(\,\prod_{i=j+1}^{b_{l+1}-1}w_{i} \Bigr)\\
&\quad\quad\Bigl(\,\prod_{i=\Delta^{(k_1)}-b_{l+1}+j+1-2N\Delta^{(k_0)}}^{\Delta^{(k_1)}-1}w^{(k_1)}_{i} \Bigr)^{-1}
\ e_{b_{2^{k_1-1}+l+1}+j-b_{l+1}-2N\Delta^{(k_0)}}
\end{align*}
and such that $\|y^{k}\|\le C\|u^k\|$, $y^{k}\in  \text{span}\{e_j: 0\le j< b_{2^{k_0-k-1}}\}$ and $y^{k_0}=0$.\\

We remark that 
\begin{align*}
\sum_{k=0}^{k_0}u^k&=\sum_{k=0}^{k_0}\left(R_{k_0-k}(x)-\sum_{s=0}^{k-1}R_{k_0-k}y^s\right)\\
&=x-\sum_{k=0}^{k_0}\sum_{s=0}^{k-1}R_{k_0-k}y^s\\
&=x-\sum_{s=0}^{k_0-1}\sum_{k=s+1}^{k_0}R_{k_0-k}y^s=x-\sum_{s=0}^{k_0-1}y^s
\end{align*}
and thus if we let $z=\sum_{k=0}^{k_0}z^k$, we get 
\begin{align*}
T^{-n}z=x+&\sum_{k=0}^{k_0}\sum_{l=2^{k_0-k-1}}^{2^{k_0-k}-1}\ 
\sum_{j=b_l}^{b_{l+1}-1}
u^k_{j}\
({v^{(k_1)}})^{-1}\!\!\!\!\!\!\!\!\!\;\;\;\;\;\;
\Bigl(\,\prod_{i=j+1}^{b_{l+1}-1}w_{i} \Bigr)\\
&\quad
\Bigl(\,\prod_{i=\Delta^{(k_1)}-b_{l+1}+j+1-2N\Delta^{(k_0)}}^{\Delta^{(k_1)}-1}w^{(k_1)}_{i} \Bigr)^{-1}
\ e_{b_{2^{k_1-1}+l+1}+j-b_{l+1}-2N\Delta^{(k_0)}}.
\end{align*}

We start by proving that $\|z\|<\varepsilon$. Since $\|u^0\|\le\|x\|$ and since for every $1\le k\le k_0$, 
\[\|u^k\|\le \|x\|+\sum_{s=0}^{k-1}\|y^s\|\le \|x\|+\sum_{s=0}^{k-1}C\|u^s\|,\]
we can deduce that for every $0\le k\le k_0$, $\|u^k\|\le \left(\sum_{s=0}^{k}(k+1)^sC^s\right)\|x\|$ and
it follows from \eqref{c} that 
\[\|z\|\le \sum_{k=0}^{k_0}\|z_k\|\le c\sum_{k=0}^{k_0}\|u_k\|\le c\|x\|(k_0+1)\sum_{s=0}^{k_0}(k_0+1)^sC^s<\varepsilon.\]
Moreover, for every $0\le m\le \frac{1}{6A} n$, we have
\begin{align*}
&\|T^{-n-m}z-T^{-m}x\|\\
&\quad\le \left\|\sum_{k=0}^{k_0}\sum_{l=2^{k_0-k-1}}^{2^{k_0-k}-1}\ 
\sum_{j=b_l}^{b_{l+1}-1}
u^k_{j}\
({v^{(k_1)}})^{-1}\!\!\!\!\!\!\!\!\!\;\;\;\;\;\;
\Bigl(\,\prod_{i=j+1}^{b_{l+1}-1}w_{i} \Bigr)\right.\\
&\quad\quad\left.
\Bigl(\,\prod_{i=\Delta^{(k_1)}-b_{l+1}+j+1-2N\Delta^{(k_0)}}^{\Delta^{(k_1)}-1}w^{(k_1)}_{i} \Bigr)^{-1}
\ T^{-m}e_{b_{2^{k_1-1}+l+1}+j-b_{l+1}-2N\Delta^{(k_0)}}\right\|\\
&\quad\le \left\|\sum_{k=0}^{k_0-1}\sum_{l=2^{k_0-k-1}}^{2^{k_0-k}-1}\ 
\sum_{j=b_l}^{b_{l+1}-1} \left(\sum_{s=0}^{k_0}(k_0+1)^sC^{s}\right)\|x\|
2^{\tau^{(k_1)}}2^{\delta^{(k_0)}}\right.\\
&\quad\quad\left.\Bigl(\,\prod_{i=\Delta^{(k_1)}-b_{l+1}-m+j+1-2N\Delta^{(k_0)}}^{\Delta^{(k_1)}-1}w^{(k_1)}_{i} \Bigr)^{-1}e_{b_{2^{k_1-1}+l+1}+j-b_{l+1}-2N\Delta^{(k_0)}-m}\right\|\\
&\quad\le \sum_{k=0}^{k_0-1}\sum_{l=2^{k_0-k-1}}^{2^{k_0-k}-1}\ 
\sum_{j=b_l}^{b_{l+1}-1} \left(\sum_{s=0}^{k_0}(k_0+1)^sC^{s}\right)\|x\|
2^{\tau^{(k_1)}}2^{\delta^{(k_0)}}2^{-\delta^{(k_1)}}\\
&\quad\le \left(\sum_{s=0}^{k_0}(k_0+1)^sC^{s}\right)\|x\|
\frac{2^{\delta^{(k_0)}}b_{2^{k_0}}}{2^{\delta^{(k_1)}-\tau^{(k_1)}}}<\varepsilon\quad\text{by \eqref{k1}}
\end{align*}
where we could replace $\Bigl(\,\prod_{i=\Delta^{(k_1)}-b_{l+1}-m+j+1-2N\Delta^{(k_0)}}^{\Delta^{(k_1)}-1}w^{(k_1)}_{i} \Bigr)^{-1}$ by $2^{-\delta^{(k_1)}}$ because for every $0\le k<k_0$, every $2^{k_0-k-1}\le l<2^{k_0}-k$, every $b_l\le j< b_{l+1}$, we have
\[\Delta^{(k_1)}-b_{l+1}-m+j+1-2N\Delta^{(k_0)}\le \Delta^{(k_1)}-2N\Delta^{(k_0)}\le \Delta^{(k_1)}-\delta^{(k_1)}-\eta^{(k_1)}\]
by choice of $N$ and 
\begin{align*}
&\Delta^{(k_1)}-b_{l+1}-m+j+1-2N\Delta^{(k_0)}\\
&\quad\ge \Delta^{(k_1)}-b_{l+1}-\frac{1}{6A} n+b_l-2N\Delta^{(k_0)}\\
&\quad\ge \Delta^{(k_1)}-\Delta^{(k_0)}-\frac{1}{6A}(\delta^{(k_1)}+2N\Delta^{(k_0)})-2N\Delta^{(k_0)}\\
&\quad\ge \Delta^{(k_1)}-\frac{1}{6}\eta^{(k_1)}-\frac{1}{6A}\delta^{(k_1)}-(1+\frac{1}{6A})(\eta^{(k_1)}+\delta^{(k_1)}+2\Delta^{(k_0)})\\
&\quad\ge \Delta^{(k_1)}-\delta^{(k_1)}-\frac{7}{6}\eta^{(k_1)}-\frac{2}{6A}\delta^{(k_1)}-\frac{1}{6A}\eta^{(k_1)}-(1+\frac{1}{6A})\frac{1}{6}\eta^{(k_1)}\\
&\quad\ge \Delta^{(k_1)}-\delta^{(k_1)}-\Big(\frac{7}{6}+\frac{2}{6}+\frac{1}{6A}+\frac{1}{6}+\frac{1}{36A}\Big)\eta^{(k_1)}\quad \text{since $\frac{\delta^{(k_1)}}{\eta^{(k_1)}}<A$}\\
&\quad>\Delta^{(k_1)}-\delta^{(k_1)}- 2\eta^{(k_1)}.
\end{align*}
We have thus succeeded in constructing  a vector $z\in X_0$ with $\|z\|<\varepsilon$ such that $\|T^{-n-m}z-T^{-m}x\|<\varepsilon$ for every $0\le m\le \frac{1}{6A}n$ and we can therefore conclude that $T^{-1}$ is $\mathcal{U}$-frequently hypercyclic by using Theorem~\ref{Theorem 39}.
\end{proof}

Before showing in the next section that under some additional conditions, $T$ is not $\mathcal{U}$-frequently hypercyclic, we try to motivate our choice of weights $w$. If we look at the previous proof, we can already remark that
\begin{itemize}
\item the first block of $1/2$ of $w$ ($1\le i\le \delta^{(k)}$), which gives a block of $2$ if we consider $1/w$, allowed us to construct a vector $z$ with small norm;
\item  the last block of $2$ of $w$ ($\Delta^{(k)}-\delta^{(k)}-\eta^{(k)}\le i< \Delta^{(k)}-\eta^{(k)}$), which gives a block of $1/2$ if we consider $1/w$, allowed us to approach $x$;
\item the second block of $1$ ($\Delta^{(k)}-\delta^{(k)}-2\eta^{(k)}\le i< \Delta^{(k)}-\delta^{(k)}-\eta^{(k)}$) allowed us to follow the orbit of $x$ during $\eta^{(k)}$ iterates.
\end{itemize}
Now, if we want to have a chance that $T$ is not $\mathcal{U}$-frequently hypercyclic, it is necessary that Theorem~\ref{Theorem 39} does not apply to $T$. If we proceed as for $T^{-1}$, we can remark that roughly speaking, if $n\in [2^{k-1},2^k)$, the last block of $1$ of $w$ ($\Delta^{(k)}-\eta^{(k)}\le i< \Delta^{(k)}$) induces that we have to wait at least $\eta^{(k)}$ iterates before a small coordinate in $[b_n,b_{n+1})$ brings big coordinates in the block $[b_{\varphi(n)},b_{\varphi(n)+1})$ under the action of $T$. Moreover, we will get a good approximation during at most $2\delta^{(k)}$ iterates because of block of $2$ for $\delta^{(k)}<i\le 2\delta^{(k)}$. In view of Theorem~\ref{Theorem 39}, we can then hope that if $\delta^{(k)}/\eta^{(k)}$ tends to $0$, $T$ may not be $\mathcal{U}$-frequently hypercyclic. These two facts have motivated our choice to end $w$ with a big block of $1$ and to put a block of $2$ directly after the first block of $\frac{1}{2}$. Finally, the second block of $1/2$ ($\Delta^{(k)}-2\delta^{(k)}-2\eta^{(k)}\le i< \Delta^{(k)}-\delta^{(k)}-2\eta^{(k)}$) was added to have $W_n=1$ and thus an invertible operator if $v_n$ is sufficiently small.

\section{$T$ is not $\mathcal{U}$-frequently hypercyclic}
We now show that the operator $T$ is not $\mathcal{U}$-frequently hypercyclic under convenient conditions on $\eta^{(k)}$, $\delta^{(k)}$, $\tau^{(k)}$ and $\Delta^{(k)}$. Conditions implying that an operator of C-type is not $\mathcal{U}$-frequently hypercyclic have been developed in \cite{Monster}. Unfortunately, these conditions are not sufficiently general to be applied at our operator $T$.

We start by generalizing Lemma 6.11 in \cite{Monster}. Given $x\in \ell^{p}(\N)$ and $l\ge 0$, we will denote in this section
\[P_lx=\sum_{i=b_l}^{b_{l+1}-1}x_ie_i\quad\text{and}\quad X=\sum_{l=0}^{\infty}\sum_{i=b_{l}}^{b_{l+1}-1}\Big(\prod_{s=i+1}^{b_{l+1}-1}w_s\Big)\,x_{i}e_{i}.\]

\begin{lemma}\label{Theorem 48}
 Let $T$ be an operator of C-type on $\ell^{p}(\N)$ and $x\in\ell^p(\N)\setminus\{0\}$. Suppose that there exist

\begin{enumerate}
 \item[-] a constant $C>0$,
 \item[-] a non-increasing sequence $(\beta _{l})_{l\ge 1}$ of positive real 
numbers with ${\sum_{l\ge 1}\sqrt{\beta _{l}}\le 1}$,
\item[-] a non-decreasing sequence $(N_{l})_{l\ge 1}$ of positive integers tending to infinity,
\item[-] a sequence $(R_l)_{l\ge 1}$ with $N_l\le R_l<\inf_{j\in \bigcup_{k\ge 1} \varphi^{-k}(l)}N_j$ and $R_l< b_{l+1}-b_l$,
\item[-] a sequence $(L_l)_{l\ge 1}$ with $(\frac{L_l}{N_l})$ tending to $0$,
\end{enumerate}
such that the following conditions are satisfied:
\smallskip
\begin{enumerate}
 \item[\emph{(1)}] $\|P_{n}x\|\le \|P_n X\|$ for every $n\ge 0$;
 \item[\emph{(2)}] $\sup\limits_{j\ge 0}\|P_{n}\,T^{\,j}P_{l}\,x\|\le 
C\beta _{l}\|P_lX\|$ for every $l\ge 1$ and every $0\le n<l$;
 \item[\emph{(2')}] $\|P_{n}\,T^{\,j}P_{l}\,x\|\le 
C\beta _{l}\|P_{[b_{l+1}-j,b_{l+1})}X\|$ for every $l\ge 1$, every $j\le R_l$ and every $0\le n<l$;
\item[\emph{(3)}] $\sup\limits_{0\le j\le N_{l}}\|P_{n}\,T^{\,j}P_{l}\,x\|\le C\beta _{l}\|P_{l}x\|$
for every $l\ge 1$ and every $0\le n<l$;
\item[\emph{(4)}] $\sup\limits_{j\ge 0}\sum\limits_{l>n}\|P_{n}\,T^{\,j}P_{l}x\|>C\|P_nX\|$ for every $n\ge 0$;
\item[\emph{(5)}] for every $l\ge 1$, every $N_l\le j\le R_l$, every $j+L_l\le k\le R_l$, 
\[\|P_l T^{k}P_{[b_{l+1}-j,b_{l+1})}x\|\ge \|P_{[b_{l+1}-j,b_{l+1})}X\|.\]
\end{enumerate}
\smallskip
Then there exists $\varepsilon >0$ such that
\[
\underline{\vphantom{p}\textrm{\emph{dens}}}\ \mathcal{N}_{T} \bigl(x,B(0,\varepsilon )^c\bigr)\ge 
\liminf_{l\to\infty }\ \inf_{k\ge R_{l}}\ \dfrac{\ \#\bigl\{0\le 
j\le 
k\,:\, \|P_{l}\,T^{\,j}P_{l}\,x\|\ge 2C\|P_l X\|\bigr\}}{k+1}.
\]
\end{lemma}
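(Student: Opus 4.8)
The plan is to bound $\|T^{n}x\|$ from below by the norm of a single block projection, $\|T^{n}x\|\ge\|P_{l}T^{n}x\|$, and to show that on a set of times whose lower density is at least the right-hand side this projection exceeds a fixed positive constant. The natural starting point is the block decomposition $P_{l}T^{n}x=P_{l}T^{n}P_{l}x+\sum_{m\ne l}P_{l}T^{n}P_{m}x$. Since the C-type dynamics sends the top coordinate $e_{b_{m+1}-1}$ of a block $m$ only back into block $m$ (through $-W_{m}^{-1}e_{b_{m}}$) or into the strictly lower block $\varphi(m)$ (through $v_{m}e_{b_{\varphi(m)}}$), the block index never increases, so $P_{l}T^{n}P_{m}x=0$ unless $l=\varphi^{t}(m)$ for some $t\ge0$, i.e. unless $m=l$ or $m\in\bigcup_{t\ge1}\varphi^{-t}(l)$. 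In particular every contributing index $m\ne l$ satisfies $m>l$ and, by the standing hypothesis on $R_{l}$, also $N_{m}>R_{l}$. The task therefore reduces to showing that the \emph{principal term} $P_{l}T^{n}P_{l}x$ dominates the \emph{leakage} $\sum_{m\in\bigcup_{t\ge1}\varphi^{-t}(l)}P_{l}T^{n}P_{m}x$ coming from the higher blocks.

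I would then treat the principal term by periodicity. Every vector supported in block $l$ is periodic for $T$ with period $2(b_{l+1}-b_{l})>2R_{l}$, so $n\mapsto\|P_{l}T^{n}P_{l}x\|$ is periodic, and its frequency of exceeding $2C\|P_{l}X\|$ over a window $[0,k]$ is exactly the cardinality in the statement, the $\inf_{k\ge R_{l}}$ selecting the least favourable window. Here conditions (5) and (2$'$), which are the new ingredients relative to \cite[Lemma 6.11]{Monster}, govern the coordinates in the tail $[b_{l+1}-j,b_{l+1})$ that leave a block and re-enter it: (5) gives, after the lag $L_{l}$ and over the range $N_{l}\le j\le R_{l}$, the lower bound $\|P_{l}T^{k}P_{[b_{l+1}-j,b_{l+1})}x\|\ge\|P_{[b_{l+1}-j,b_{l+1})}X\|$ on the mass returning to its own block, while (2$'$) bounds the complementary mass leaking downwards. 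Together with (1) and with $L_{l}/N_{l}\to0$, these guarantee that on the counted times the returning tail mass reinforces, rather than cancels, the principal term, so that $\|P_{l}T^{n}x\|$ genuinely inherits its largeness.

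The technical heart is the leakage estimate, where I would show $\sum_{m\in\bigcup_{t\ge1}\varphi^{-t}(l)}\|P_{l}T^{n}P_{m}x\|\le C\|P_{l}X\|$, exactly half of the threshold, so that on the counted times $\|P_{l}T^{n}x\|\ge2C\|P_{l}X\|-C\|P_{l}X\|=C\|P_{l}X\|$. For each contributing index $m$, condition (2) supplies the time-uniform bound $\|P_{l}T^{n}P_{m}x\|\le C\beta_{m}\|P_{m}X\|$, while in the short-time regime, which is available because $R_{l}<N_{m}$ for every such $m$, condition (3) supplies the sharper $\|P_{l}T^{n}P_{m}x\|\le C\beta_{m}\|P_{m}x\|$; the tail contributions are handled by (2$'$). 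Summing over the infinitely many contributing blocks is made possible by the normalisation $\sum_{m\ge1}\sqrt{\beta_{m}}\le1$, writing each term as $\sqrt{\beta_{m}}\cdot(\sqrt{\beta_{m}}\|P_{m}X\|)$ and pulling out the convergent factor $\sum_{m}\sqrt{\beta_{m}}$, together with (1) and the monotonicity of $(\beta_{m})$. I expect this step to be the main obstacle: the estimate must be uniform over the unboundedly large times $n\le k$, it must separate the short-time regime ($n\le R_{l}$, where (3) applies) from the long-time regime (where only the time-uniform (2) is available), and it must be orchestrated so that neither a single higher block nor the accumulation of all of them exhausts the budget $C\|P_{l}X\|$.

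Finally I would assemble the density bound. Combining the two previous steps yields, for each fixed block $l$, that on the counted times of every sufficiently long window one has $\|T^{n}x\|\ge C\|P_{l}X\|$, whence $\underline{\textrm{dens}}\,\mathcal{N}_{T}\bigl(x,B(0,C\|P_{l}X\|)^{c}\bigr)\ge\inf_{k\ge R_{l}}\#\{0\le j\le k:\|P_{l}T^{j}P_{l}x\|\ge2C\|P_{l}X\|\}/(k+1)$. Since the set on the left only grows as its radius shrinks, fixing $\varepsilon>0$ below a non-degenerate value of $C\|P_{l}X\|$ and letting $l\to\infty$ recovers the $\liminf_{l}$ in the statement; I expect condition (4) to enter precisely here, ensuring that the reweighted scale $\|P_{l}X\|$ does not degenerate along the relevant blocks so that such an $\varepsilon$ can indeed be chosen. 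This gives the asserted lower bound for $\underline{\textrm{dens}}\,\mathcal{N}_{T}(x,B(0,\varepsilon)^{c})$.
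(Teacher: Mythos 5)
Your opening moves coincide with the paper's: the bound $\|T^{j}x\|\ge\|P_{l}T^{j}P_{l}x\|-\sum_{m}\|P_{l}T^{j}P_{m}x\|$, the observation that only $m\in\{l\}\cup\bigcup_{t\ge1}\varphi^{-t}(l)$ contribute, and the identification of the counted set with the behaviour of the principal term. But the step you yourself call the technical heart --- a leakage bound $\sum_{m\in\bigcup_{t\ge1}\varphi^{-t}(l)}\|P_{l}T^{n}P_{m}x\|\le C\|P_{l}X\|$ holding \emph{uniformly in $n$} for a fixed block $l$ --- is not merely difficult: it is false under the hypotheses. Condition (4) asserts exactly the opposite, $\sup_{j\ge0}\sum_{m>l}\|P_{l}T^{j}P_{m}x\|>C\|P_{l}X\|$ for \emph{every} $l$, and in the intended application this supremum is infinite because $x$ is hypercyclic. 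No combination of (2), (2'), (3) and $\sum_{m}\sqrt{\beta_{m}}\le1$ can rescue it in the long-time regime, since nothing in the hypotheses bounds $\|P_{m}X\|$ in terms of $\|P_{l}X\|$ for $m>l$ (on the contrary, these quantities must grow). Consequently your assembly step (``for each fixed block $l$, on counted times of every sufficiently long window one has $\|T^{n}x\|\ge C\|P_{l}X\|$'') cannot hold, and your reading of (4) as a non-degeneracy statement about the scale $\|P_{l}X\|$ has its role backwards: (4) is the engine that forces the reference block to change with time.

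What is missing is precisely this change-of-block mechanism, which is the core of the paper's proof. One defines inductively $j_{n-1}$ as the \emph{first} time at which the leakage into block $l_{n-1}$ exceeds $C\|P_{l_{n-1}}X\|$ (finite exactly because of (4)); the fixed-block argument is then legitimate only on $[0,j_{n-1})$. At time $j_{n-1}$ some block $l_{n}\in\bigcup_{k\ge1}\varphi^{-k}(l_{n-1})$ carries leakage at least $C\sqrt{\beta_{l_{n}}}\|P_{l_{n-1}}X\|$; conditions (2) and (2') then force $\|P_{l_{n}}X\|\ge\beta_{l_{n}}^{-1/2}\|P_{l_{n-1}}X\|$ and the analogous inequality for the tail projection $P_{[b_{l_{n}+1}-j_{n-1},b_{l_{n}+1})}X$, while (1) and (3) force $j_{n-1}>N_{l_{n}}$. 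A window $[0,s]$ is then analysed with the block $l_{n_{s}}$ adapted to it, where $s\in[j_{n_{s}-1},j_{n_{s}})$, in two cases: if $s\ge R_{l_{n_{s}}}$, the count for block $l_{n_{s}}$ applies directly; if $s<R_{l_{n_{s}}}$, one counts block $l_{n_{s}-1}$ on $[0,j_{n_{s}-1})$ and shows, using (5) together with the tail inequality and the smallness of the leakage below time $R_{l_{n_{s}}}<\inf_{m}N_{m}$, that \emph{every} $j\in[j_{n_{s}-1}+L_{l_{n_{s}}},s]$ satisfies $\|T^{j}x\|\ge\varepsilon$. This is where (5), the condition $R_{l}<\inf_{j\in\bigcup_{k\ge1}\varphi^{-k}(l)}N_{j}$ and $L_{l}/N_{l}\to0$ actually enter --- not as a ``reinforcement'' of the principal term, as you suggest. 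Without this inductive construction the statement cannot be reached from the hypotheses.
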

\begin{proof}
Let $x\in\ell^p(\N)\setminus\{0\}$. If $x$ is periodic then there exists $\varepsilon 
>0 $ such that 
\[
\underline{\vphantom{p}\textrm{dens}}\ \mathcal{N}_{T}(x,B(0,\varepsilon )^c)=1.
\]
Without loss of generality, we can thus assume that $x$ is not periodic. In particular, $x-P_{0}x\ne 0$ and we can find $l_{0}\ge 1$ such that 
\[
\|P_{l_{0}}\,x\|\ge \sqrt{\beta _{l_{0}}}\,\|x-P_{0}\,x\|
\]
since $\sum_{l\ge 1}\sqrt{\beta _{l}}\le 1$.

Let $\varepsilon<\min\{C,1/2\}\|P_{l_0}X\|$.
There exists a  strictly increasing  sequence of integers 
$(l_{n})_{n\ge 1}$ such that if we set 
\[ j_{n-1}:=\min\,\biggl\{j\ge \,
0\,:\,\sum_{l>l_{n-1}}\|P_{l_{n-1}}T^{j}P_{l}\,x\|>C\|P_{l_{n-1}}X\|\biggr\}, \]
then
\[j_{n-1}>N_{l_{n}}\quad l_n\in \bigcup_{k\ge 1}\varphi^{-k}(l_{n-1})\quad\textrm{and}\quad 
\|P_{l_{n}}X\|\ge\dfrac{1}{\sqrt{\beta 
_{l_{n}}}}\,\|P_{l_{n-1}}X\|\quad \textrm{for every } n\ge 1
\]
and, in addition, if $j_{n-1}\le R_{l_n}$ then 
\begin{equation}
\label{Equation 17}
\|P_{[b_{l_n+1}-j_{n-1},b_{l_n+1})}X\| \ge \dfrac{1}{\sqrt{
\beta 
_{l_n}}}\, \|P_{l_{n-1}}X\|.
\end{equation}
Indeed, if we assume that $l_{1},\dots,l_{n-1}$ have been chosen then $j_{n-1}$ is well-defined in view of assumption (4) and we can consider for $l_{n}$ an index satisfying
\begin{equation*}
 \|P_{l_{n-1}}T^{\,j_{n-1}}P_{l_{n}}\,x\|>C\sqrt{\beta_{l_{n}}}\|P_{l_{n-1}}X\|.
\end{equation*}
In particular, we have $l_n\in \bigcup_{k\ge 1}\varphi^{-k}(l_{n-1})$.
Moreover, by (2), we get
\[
\|P_{l_n}X\|\ge \dfrac{1}{C\beta_{l_{n}}}\|P_{l_{n-1}}T^{\,j_{n-1}}P_{l_{n}}\,x\|\ge \dfrac{1}{\sqrt{\beta_{l_n}}}\,\|P_{l_{n-1}}X\|
\]
and if $j_{n-1}\le R_{l_n}$, we get by (2')
\[ \|P_{[b_{l_n+1}-j_{n-1},b_{l_n+1})}X\| \ge \dfrac{1}{C\beta_{l_{n}}}\|P_{l_{n-1}}T^{\,j_{n-1}}P_{l_{n}}\,x\|\ge \dfrac{1}{\sqrt{\beta_{l_n}}}\, \|P_{l_{n-1}}X\|. \]
On the other hand, for every $0\le j\le N_{l_{n}}$, it follows from (1) and (3) that
\begin{align*}
\|P_{l_{n-1}}T^{\,j}P_{l_{n}}\,x\|&\le C\beta _{l_{n}}\,\|P_{l_{n}}
\,x\|\le C\beta _{l_{n}}\,\|x-P_{0}x\|\le \dfrac{C\beta 
_{l_{n}}}{\sqrt{\beta 
 _{l_{0}}}}\,\|P_{l_{0}}\,x\|\\
 &\le \dfrac{C\beta _{l_{n}}}{\sqrt{\beta 
 _{l_{0}}}}\,\|P_{l_{0}}X\|\le C\sqrt{\beta_{l_n}}\,\|P_{l_{0}}X\|\le C\sqrt{\beta_{l_n}}\,\|P_{l_{n-1}}X\|
\end{align*}
since for every $k\le n-1$, $\|P_{l_{k}}X\|\ge\dfrac{1}{\sqrt{\beta 
_{l_{k}}}}\,\|P_{l_{k-1}}X\|\ge \|P_{l_{k-1}}X\|$ and thus $\|P_{l_{n-1}}X\|\ge \|P_{l_{0}}X\|$. This implies that $j_{n-1}>N_{l_{n}}$.
\par\smallskip
For any integer $s\geq 1$, we denote by $n_{s}$ the smallest integer such that $s$ belongs to $ 
[j_{n_{s}-1},j_{n_{s}})$. We remark that $n_{s}$ tends to infinity as $s$ tends to 
infinity. We divide the study of the ratio $\dfrac{\# \bigl\{0\le j\le s\,:\,\|T^{\,j}x\|\ge \varepsilon\bigr\}}{s+1}$ into two cases: $R_{l_{n_s}}\le s$ or $s<R_{l_{n_s}}$.

\begin{itemize}
\item Case 1: $R_{l_{n_s}}\le s$.\\
 Since for every $j\ge 0$ and $n\ge 0$,
\[
\smash[b]{\|T^{\,j}\,x\|\ge\|P_{n}\,T^{\,j}\,x\|\ge 
\|P_{n}\,T^{\,j}P_{n}\,x\|-
\sum_{l>n}\|P_{n}\,T^{\,j}P_{l}\,x\|},
\]
we have by definition of $j_n$
\[
\|T^{\,j}x\|\ge \|P_{l_{n}}T^{\,j}P_{l_{n}}\,x\|-C\,\|P_{l_{n}}X\|\quad  
\textrm{for every}\ 0\le j< j_{n}.
\]
Therefore, for every $s\ge 1$, we have 
\begin{align*}
\bigl\{0\le j\le s\,:\,\|T^{\,j}x\|\ge \varepsilon\bigr\}&\supset
\bigl\{0\le j\le s\,:\,\|T^{\,j}x\|\ge C\|P_{l_{0}}X\|\bigr\}\\
&\supset \bigl\{0\le j\le s\,:\,\|T^{\,j}x\|\ge C\|P_{l_{n_{s}}}X\|\bigr\}\\
&\supset \bigl\{0\le j\le s\,:\,\|P_{l_{n_{s}}}T^{\,j}P_{l_{n_{s}}}\,x\|
\ge 2C\|P_{l_{n_{s}}}X\|\bigr\}
\end{align*}
since $j\le s< j_{n_s}$. It follows that if $R_{l_{n_s}}\le s$ then
\begin{align*}
\dfrac{\# \bigl\{0\le j\le s\,:\,\|T^{\,j}x\|\ge \varepsilon\bigr\}}{s+1}&\ge 
\dfrac{\ \#\bigl\{0\le 
j\le 
s\,:\, \|P_{l_{n_{s}}}\,T^{\,j}P_{l_{n_{s}}}\,x\|\ge 
2C\|P_{l_{n_{s}}}X\|\bigr\}}{s+1}\\
&\ge \inf_{k\ge R_{l_{n_s}}}\dfrac{\ \#\bigl\{0\le 
j\le 
k\,:\, \|P_{l_{n_{s}}}\,T^{\,j}P_{l_{n_{s}}}\,x\|\ge 
2C\|P_{l_{n_{s}}}X\|\bigr\}}{k+1}.
\end{align*}
\item Case 2: $s<R_{l_{n_s}}$.\\
It follows from (3) that for every $j_{n_s-1}\le j\le s$
\begin{align*}
\|T^jx\|&\ge \|P_{l_{n_s}}T^jP_{l_{n_s}}x\|-\sum_{l\in \bigcup_{k\ge 1} \varphi^{-k}(l_{n_s})}\|P_{l_{n_s}}T^jP_lx\|\\
&\ge \|P_{l_{n_s}}T^jP_{[b_{l_{n_s}+1}-j_{n_{s}-1},b_{l_{n_s}+1})}x\|-\sum_{l\in \bigcup_{k\ge 1} \varphi^{-k}(l_{n_s})}C\beta_l\|P_lx\|
\end{align*}
since $j\le s<R_{l_{n_s}}<\inf_{l\in \bigcup_{k\ge 1} \varphi^{-k}(l_{n_s})}N_l$. Since $\inf_{l\in \bigcup_{k\ge 1} \varphi^{-k}(l_{n_s})}N_l$ tends to infinity and $\sum_{l\ge 1}\beta_l<\infty$, there exists $s_0$ such that for every $s\ge s_0$, if $s<R_{l_{n_s}}$, we have for every $j_{n_s-1}\le j\le s$
\[\|T^jx\|\ge \|P_{l_{n_s}}T^jP_{[b_{l_{n_s}+1}-j_{n_s-1},b_{l_{n_s}+1})}x\|-\varepsilon.\]
Therefore, for every $s\ge s_0$ satisfying $s<R_{l_{n_s}}$, since $R_{l_{n_s-1}}< N_{l_{n_s}}\le j_{n_s-1}\le s <R_{l_{n_s}}$, we have
\begin{align*}
&\frac{\#\{0\le j\le s: \|T^jx\|\ge \varepsilon\}}{s+1}\\
&\quad\ge \frac{\#\{0\le j< j_{n_s-1}: \|T^jx\|\ge \varepsilon\}}{s+1}+\frac{\#\{j_{n_s-1}\le j\le s: \|T^jx\|\ge \varepsilon\}}{s+1}\\
&\quad\ge \dfrac{\ \#\bigl\{ 
0\le j< j_{n_s-1}\,:\, \|P_{l_{n_s-1}}\,T^{\,j}P_{l_{n_s-1}}\,x\|\ge 
2C\|P_{l_{n_{s-1}}}X\|\bigr\}}{s+1}\\
&\quad\quad+\frac{\#\{j_{n_s-1}\le j\le s: \|P_{l_{n_s}}T^jP_{[b_{l_{n_s}+1}-j_{n_s-1},b_{l_{n_s}+1})}x\|\ge 2\varepsilon\}}{s+1}\\
&\quad\ge \frac{j_{n_s-1}}{s+1}\inf_{k\ge R_{l_{n_s-1}}}\dfrac{\ \#\bigl\{0\le 
j\le 
k\,:\, \|P_{l_{n_s-1}}\,T^{\,j}P_{l_{n_s-1}}\,x\|\ge 
2C\|P_{l_{n_s-1}}X\|\bigr\}}{k+1}\\
&\quad\quad+\frac{\#\{j_{n_s-1}+L_{l_{n_{s}}}\le j\le s: \|P_{[b_{l_{n_s}+1}-j_{n_s-1},b_{l_{n_s}+1})}\,X\|\ge 2\varepsilon\}}{s+1} \quad \text{(by (5))}\\
&\quad\ge \frac{j_{n_s-1}}{s+1}\inf_{k\ge R_{l_{n_s-1}}}\dfrac{\ \#\bigl\{0\le 
j\le 
k\,:\, \|P_{l_{n_s-1}}\,T^{\,j}P_{l_{n_s-1}}\,x\|\ge 
2C\|P_{l_{n_s-1}}X\|\bigr\}}{k+1}\\
&\quad\quad+\frac{\#\{j_{n_s-1}+L_{l_{n_{s}}}\le j\le s: \frac{\|P_{l_{n_s-1}}\,X\|}{\sqrt{\beta_{l_{n_s}}}}\ge 2\varepsilon\}}{s+1} \quad\text{(by \eqref{Equation 17})}\\
&\quad\ge \frac{j_{n_s-1}}{s+1}\inf_{k\ge R_{l_{n_s-1}}}\dfrac{\ \#\bigl\{0\le 
j\le 
k\,:\, \|P_{l_{n_s-1}}\,T^{\,j}P_{l_{n_s-1}}\,x\|\ge 
2C\|P_{l_{n_s-1}}X\|\bigr\}}{k+1}\\
&\quad\quad+\frac{s-j_{n_s-1}-L_{l_{n_{s}}}}{s+1} \quad \text{(since $\|P_{l_{n_s-1}}\,X\|\ge \|P_{l_{0}}\,X\|>2\varepsilon$)}\\
&\quad\ge \frac{s-L_{l_{n_s}}}{s+1}\inf_{k\ge R_{l_{n_s-1}}}\dfrac{\ \#\bigl\{0\le 
j\le 
k\,:\, \|P_{l_{n_s-1}}\,T^{\,j}P_{l_{n_s-1}}\,x\|\ge 
2C\|P_{l_{n_s-1}}X\|\bigr\}}{k+1}\\
&\quad\ge \frac{N_{l_{n_s}}-L_{l_{n_s}}}{N_{l_{n_s}}+1}\inf_{k\ge R_{l_{n_s-1}}}\dfrac{\ \#\bigl\{0\le 
j\le 
k\,:\, \|P_{l_{n_s-1}}\,T^{\,j}P_{l_{n_s-1}}\,x\|\ge 
2C\|P_{l_{n_s-1}}X\|\bigr\}}{k+1}\\
&\quad\ge \left(\inf_{l>l_{n_s-1}}\frac{N_{l}-L_{l}}{N_{l}+1}\right)\inf_{k\ge R_{l_{n_s-1}}}\dfrac{\ \#\bigl\{0\le 
j\le 
k\,:\, \|P_{l_{n_s-1}}\,T^{\,j}P_{l_{n_s-1}}\,x\|\ge 
2C\|P_{l_{n_s-1}}X\|\bigr\}}{k+1}.
\end{align*}
 \end{itemize}
Since $l_{n_s}\to\infty$ as $s\to\infty$ and $L_l/N_l\to 0$ as $l\to \infty$, we deduce from two cases that
\begin{align*}
&\underline{\vphantom{p}\textrm{dens}}\ \mathcal{N}_{T}
\bigl(x,B(0,\varepsilon )^c\bigr)\\
&\quad\ge 
\liminf_{l\to\infty }\ \left[\left(\inf_{l'>l}\frac{N_{l'}-L_{l'}}{N_{l'}+1}\right)\inf_{k\ge R_{l}}\ \dfrac{\ \#\bigl\{0\le 
j\le 
k\,:\, \|P_{l}\,T^{\,j}P_{l}\,x\|\ge 
2C\|P_{l}X\|\bigr\}}{k+1}\right]\\
&\quad=\liminf_{l\to\infty }\inf_{k\ge R_{l}}\ \dfrac{\ \#\bigl\{0\le 
j\le 
k\,:\, \|P_{l}\,T^{\,j}P_{l}\,x\|\ge 
2C\|P_{l}X\|\bigr\}}{k+1}.
\end{align*}
\end{proof}

In order to apply Lemma~\ref{Theorem 48} to our operator $T$, we will use the following two propositions.
\begin{prop}[{\cite[Proposition 6.12]{Monster}}]\label{Proposition 50}
 Let $T$ be an operator of C-type on $\ell^{p}(\N)$ and  
 let $(C_{n})_{n\ge 
0}$ be a sequence of positive numbers with $0<C_n<1$. Assume that 
\[
|v_n|\ .\sup_{j\in 
[b_{\varphi(n)},b_{\varphi(n)+1})}\ \Bigl(
\prod_{s=b_{\varphi(n)}+1}^{j}|w_{s}|\Bigr)\le C_{n}\quad\hbox{  for every $n\ge 1$. }
\]
 Then, for any $x\in\ell^p(\N)$, 
we have for every $l\ge 1$ and 
every $0\le n<l$,
\begin{enumerate}
 \item [\rm {(1)}]$\quad
 {\sup_{j\ge 0}\ \|P_{n}T^{\,j}P_{l}\,x\|\le 
C_{l}\,(b_{l+1}-b_{l})^{\frac{p-1}{p}}\ \|P_l X\|}
$
\par\smallskip \noindent and
\item[\rm {(2)}]
$
\quad {\sup_{j\le N}\|P_{n}T^{\,j}P_{l}\,x\|\le 
C_{l}\,(b_{l+1}-b_{l})^{\frac{p-1}{p}}\|P_{[b_{l+1}-N,b_{l+1})}\,X\|}
$
\par\smallskip \noindent for every $1\le N\le b_{l+1}-b_{l}$.
\end{enumerate}
\end{prop}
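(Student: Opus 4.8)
The plan is to track, coordinate by coordinate, where $T^j$ sends the mass of $P_lx$ and to isolate the part that lands in block $n$. First I would record the local dynamics inside a single block: as long as a shift has not reached the right endpoint, $T$ acts by $Te_i=w_{i+1}e_{i+1}$ for $b_l\le i<b_{l+1}-1$, so the coefficient accumulated when the mass starting at $e_i$ first reaches the endpoint $b_{l+1}-1$ is exactly $\prod_{s=i+1}^{b_{l+1}-1}w_s$. This is precisely the weight appearing in the definition of $X$, namely $(P_lX)_i=x_i\prod_{s=i+1}^{b_{l+1}-1}w_s$; this is why $\|P_lX\|$ is the natural quantity on the right-hand side, since it measures the total mass that $P_lx$ can deliver to the endpoint of block $l$.

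Next I would use that the only way mass leaves block $l$ is through the endpoint rule $Te_{b_{l+1}-1}=v_le_{b_{\varphi(l)}}-W_l^{-1}e_{b_l}$. Since $\varphi(m)<m$, a block $n<l$ can receive mass only through the descending chain $l\to\varphi(l)\to\varphi^2(l)\to\cdots$, so only $n=\varphi^K(l)$ (for some $K\ge1$) is ever reached, and along such a descent the coefficient is a product of the transfer factors $v_{\varphi^m(l)}$ interspersed with the $w$-products accumulated while crossing the successive blocks $\varphi^{m+1}(l)$. Grouping each $v_{\varphi^m(l)}$ with the (full or, at the last step, partial) $w$-product accumulated inside the block $\varphi^{m+1}(l)$ it then enters, the hypothesis applied to the index $\varphi^m(l)$ bounds each group by $C_{\varphi^m(l)}$, which equals $C_l$ for the first group ($m=0$) and is $<1$ for all later ones. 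Hence every single contribution that the coordinate $e_i$ of block $l$ sends into block $n$ has modulus at most $C_l\,|(P_lX)_i|$.

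It then remains to turn this pointwise control into the two $\ell^p$ bounds. For (1) I would note that block $n=\varphi^K(l)$ has at most $b_{n+1}-b_n\le b_{l+1}-b_l$ coordinates, the inequality holding because $b_{m+1}-b_m$ is a multiple of $2(b_{\varphi(m)+1}-b_{\varphi(m)})$, so block sizes strictly decrease along the $\varphi$-chain. Estimating the modulus at each occupied coordinate of block $n$ by the bound just obtained and applying H\"older's inequality over these at most $b_{l+1}-b_l$ coordinates produces, after a short computation, the factor $(b_{l+1}-b_l)^{(p-1)/p}$ and hence (1). For (2) I would observe that the mass at coordinate $i$ of block $l$ needs at least $b_{l+1}-i$ iterations before any of it can leave block $l$ (it must first be shifted to the endpoint $b_{l+1}-1$ and split there); therefore when $j\le N$ only the coordinates $i\in[b_{l+1}-N,b_{l+1})$ can have fed any block $n<l$, and rerunning the estimate with $P_lx$ replaced by $P_{[b_{l+1}-N,b_{l+1})}x$ gives $\|P_{[b_{l+1}-N,b_{l+1})}X\|$ in place of $\|P_lX\|$.

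The main obstacle is the bookkeeping concealed in this descent. Because the endpoint rule re-injects mass at $b_l$ through the term $-W_l^{-1}e_{b_l}$ (and similarly inside every intermediate block $\varphi^m(l)$), a single coordinate produces not one transfer but a whole sign-alternating sequence of them, one per lap around each block, and these fan out at every level of the chain. The delicate point is to verify that at a fixed time $j$ all these contributions occupy only a controlled set of distinct coordinates of block $n$, with no uncontrolled accumulation, so that the crude counting used in the H\"older step is legitimate; the exact matching of the accumulated $w$-products with the weights defining $X$ is precisely what keeps the constant equal to $C_l(b_{l+1}-b_l)^{(p-1)/p}$. I expect this counting, together with the verification of the exponent $(p-1)/p$, to be the only genuinely technical part, the remainder being a direct unwinding of the action of $T$ block by block.
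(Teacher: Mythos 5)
Your skeleton is the right one, and it is essentially the one used in \cite{Monster} (note that the present paper does not prove this proposition at all; it quotes it, so the comparison is with the proof in the reference): mass leaves block $l$ only through the endpoint rule, it can reach a block $n<l$ only along the chain $l\to\varphi(l)\to\varphi^2(l)\to\cdots$, each transfer is grouped as $v_{\varphi^m(l)}$ times a (full or partial) $w$-product inside block $\varphi^{m+1}(l)$ so that the hypothesis bounds the group by $C_{\varphi^m(l)}$, and then the triangle inequality plus H\"older over the at most $b_{l+1}-b_l$ contributing coordinates yields the exponent $\frac{p-1}{p}$; likewise your observation for (2), that only coordinates $i\ge b_{l+1}-N$ can have left block $l$ when $j\le N$, is correct. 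But there is a genuine gap precisely at the point you defer as ``bookkeeping'': the claim that each coordinate $e_i$ of block $l$ contributes, at a fixed time $j$, at most one term of modulus $\le C_l\,|(P_lX)_i|$ to block $n$. As you note, $e_i$ ejects a packet into block $\varphi(l)$ at \emph{every} time $b_{l+1}-i+q(b_{l+1}-b_l)$, $q=0,1,2,\dots$, and your proposed resolution --- checking that these contributions ``occupy only a controlled set of distinct coordinates'' --- points in the wrong direction. Since $b_{l+1}-b_l$ is a multiple of $\Delta_{\varphi(l)}:=b_{\varphi(l)+1}-b_{\varphi(l)}$, at any fixed $j$ all the packets issued from the same $i$ sit at the \emph{same} position of block $\varphi(l)$, each with the same full modulus $|v_l|\,|(P_lX)_i|\prod_s|w_s|$. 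Their number grows linearly in $j$, so no bound uniform in $j$ can be extracted from moduli and counting alone; a purely ``crude counting'' version of the H\"older step is not legitimate.

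What makes the statement true is cancellation, not counting. The return term $-W_l^{-1}e_{b_l}$ in the endpoint rule makes consecutive ejected packets carry opposite signs, and a packet already inside block $\varphi(l)$ also flips its sign after each lap of length $\Delta_{\varphi(l)}$. The definition of a C-type operator requires $b_{l+1}-b_l$ to be a multiple of $2\Delta_{\varphi(l)}$, and this factor $2$ is exactly what matters: it forces the superposed packets to occupy coherent positions with alternating signs, so they cancel in pairs, and the total contribution of $e_i$ to block $\varphi(l)$ at any time $j$ is either $0$ or a single packet; an induction along the chain (using at each level that $\Delta_{\varphi^m(l)}$ is a multiple of $2\Delta_{\varphi^{m+1}(l)}$) extends this to every block $\varphi^K(l)$. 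This parity argument is the heart of the proof in \cite{Monster} --- it is the content of the lemmas preceding Proposition 6.12 there --- and it is the one step your proposal neither states nor supplies a mechanism for; once it is in place, the rest of your outline goes through verbatim.
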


\begin{prop}[{\cite[Proposition 7.13]{Monster}}]
\label{Proposition 51}
 Let $T$ be an operator of C-type on $\ell^{p}(\N)$
  and let $x\in
 \ell^p(\N)$.  
 Fix $l\ge 0$. Suppose that there exist three integers $0\le 
k_{0}<k_{1}<k_2\le b_{l+1}-b_{l}$ such that 
\[
|w_{b_{l}+k}|=1\quad\textrm{for every}\ 
k\in(k_{0},k_{1})\cup (k_2,b_{l+1}-b_{l})\quad\textrm{and}\ 
\prod_{s=b_l+k_{0}+1}^{b_{l+1}-1}|w_{s}|=1.
\]
Then we have for every $J\ge 0$,
\begin{align*}
 &\dfrac{1}{J+1}\ \#\Bigl\{0\le j\le J\,:\, \|P_{l}T^{\,j}P_{l}\,x\|\ge 
\|P_lX\|/2\Bigr\}\\
 &\hspace{3cm}\ge 
1-4\bigl(k_2-k_1+k_0\bigr)\,\Bigl( \frac{1}{J+1}+\frac{1}{b_{
l+1}-b_{l}}\Bigr)\cdot
\end{align*}
\end{prop}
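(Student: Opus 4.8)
The plan is to reduce $P_lT^jP_l$ to an internal cyclic weighted shift on the block $[b_l,b_{l+1})$ and then run an elementary Markov count over one period. Put $\Delta=b_{l+1}-b_l$ and, for $b_l\le i<b_{l+1}$, write $a_i=\prod_{s=i+1}^{b_{l+1}-1}|w_s|$, so that $\|P_lX\|^p=\sum_{i=b_l}^{b_{l+1}-1}a_i^p|x_i|^p$. Since $\varphi(n)<n$ for every $n$, any mass leaving block $l$ (through the term $v_ne_{b_{\varphi(n)}}$) enters strictly smaller blocks and, $\varphi$ being strictly decreasing, never returns to block $l$; hence $P_lT^jP_l$ is driven solely by the cyclic weighted shift of length $\Delta$ with weights $w_{b_l+1},\dots,w_{b_{l+1}-1}$ and wrap factor $-W_l^{-1}$, whose full cyclic product has modulus $1$. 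In particular the basis vectors do not interfere: writing $i'=b_l+\bigl((i-b_l+j)\bmod\Delta\bigr)$ one has $P_lT^jP_le_i=g(i,j)\,e_{i'}$, so $\|P_lT^jP_lx\|^p=\sum_i|g(i,j)|^p|x_i|^p$. A direct check in the no-wrap case $i-b_l+j<\Delta$ and the wrap case $i-b_l+j\ge\Delta$ (the latter using $\prod_{s=b_l+1}^{b_{l+1}-1}|w_s|=|W_l|$) collapses both to the single identity $|g(i,j)|=a_i/a_{i'}$. Since the full cyclic product has modulus $1$, $|g(i,j+\Delta)|=|g(i,j)|$, so $j\mapsto\|P_lT^jP_lx\|$ is $\Delta$-periodic.

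Next I would use the weight hypotheses to locate the bad positions. As $|w_{b_l+k}|=1$ on $(k_0,k_1)\cup(k_2,\Delta)$ and $\prod_{s=b_l+k_0+1}^{b_{l+1}-1}|w_s|=1$, one checks at once that $a_{b_l+k_0}=1$ directly from the hypothesis, and then $a_i=1$ whenever $i-b_l\in[k_0,k_1)\cup[k_2,\Delta)$. Hence the set $B=\{b_l\le i<b_{l+1}:a_i\ne1\}$ satisfies $B-b_l\subseteq[0,k_0)\cup[k_1,k_2)$, so $\#B\le k_2-k_1+k_0$. The point of the identity for $|g(i,j)|$ is now that, for any phase $j$, every source position $i$ whose image $i'$ lies outside $B$ keeps its full weight: $|g(i,j)|^p|x_i|^p=a_i^p|x_i|^p$ because $a_{i'}=1$.

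The counting then runs as follows. Fix $x$, set $m_i=a_i^p|x_i|^p$ and $M=\sum_i m_i=\|P_lX\|^p$, and for each phase $j$ let $M_j=\sum_{i:\,i'\in B}m_i$. Discarding the nonnegative terms with $i'\in B$ and keeping the rest gives $\|P_lT^jP_lx\|^p\ge M-M_j$; thus every $j$ with $M_j\le(1-2^{-p})M$ satisfies $\|P_lT^jP_lx\|\ge\tfrac12\|P_lX\|$. Over the base period $0\le j<\Delta$, each source $i$ has $i'\in B$ for exactly $\#B$ values of $j$, so $\sum_{j=0}^{\Delta-1}M_j=M\,\#B$; Markov's inequality bounds the number of $j$ in $[0,\Delta)$ with $M_j>(1-2^{-p})M$ — hence the number of bad phases $\|P_lT^jP_lx\|<\tfrac12\|P_lX\|$ — by $\#B/(1-2^{-p})\le2(k_2-k_1+k_0)$, and by $\Delta$-periodicity the same holds in every length-$\Delta$ window. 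Finally the interval $[0,J]$ meets at most $(J+1)/\Delta+1$ such windows, whence the proportion of bad $j$ in $[0,J]$ is at most $2(k_2-k_1+k_0)\bigl(\tfrac1\Delta+\tfrac1{J+1}\bigr)$, a fortiori at most the asserted $4(k_2-k_1+k_0)\bigl(\tfrac1\Delta+\tfrac1{J+1}\bigr)$.

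The main obstacle is the very first step: verifying rigorously that the escaped mass never re-enters block $l$ (so that $P_lT^jP_l$ is exactly the cyclic shift) and that the no-wrap and wrap computations genuinely collapse to the clean identity $|g(i,j)|=a_i/a_{i'}$. Once this structural fact is in place, together with the observation that $a_{i'}=1$ off the small set $B$, everything reduces to the routine Markov count above.
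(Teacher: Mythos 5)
Your proof is correct, but note that there is nothing in this paper to compare it against: Proposition~\ref{Proposition 51} is imported verbatim from \cite{Monster} (Proposition 7.13 there) and is not proved here, so your argument stands as an independent, self-contained proof. Its skeleton is sound at every step. The structural reduction is legitimate: since $T$ maps each block $[b_m,b_{m+1})$ into the union of block $m$ and block $\varphi(m)\le m$ (with $\varphi(m)<m$ for $m\ge 1$), one shows by induction on $j$ that $T^jP_l=A_l^jP_l+y_j$ with $A_l$ the internal cyclic shift and $y_j$ supported on blocks of index $<l$, whence $P_lT^jP_l=A_l^jP_l$ exactly. The identity $|g(i,j)|=a_i/a_{i'}$ does collapse in the wrap case because $|W_l|=\bigl(\prod_{s=b_l+1}^{i'}|w_s|\bigr)a_{i'}$, and $A_l^{\Delta}=-I$ on the block gives the $\Delta$-periodicity of the modulus. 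The weight hypotheses give $a_i=1$ for $i-b_l\in[k_0,k_1)\cup[k_2,\Delta)$, so $\#B\le k_0+k_2-k_1$, and the Markov count ($\sum_{j=0}^{\Delta-1}M_j=M\,\#B$, threshold $(1-2^{-p})M\ge M/2$) yields at most $2(k_2-k_1+k_0)$ bad phases per window of length $\Delta$; the covering of $[0,J]$ by at most $(J+1)/\Delta+1$ windows then gives the stated density bound with constant $2$ in place of $4$, i.e.\ slightly sharper than the quoted inequality. Two small points of hygiene: dispose of the trivial case $\|P_lX\|=0$ before dividing in Markov's inequality, and the phrase ``$\varphi$ being strictly decreasing'' should be replaced by ``$\varphi(n)<n$ for every $n\ge 1$'', which is what actually prevents escaped mass from returning to block $l$.
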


We are now able to state sufficient conditions on parameters $(\eta^{(k)})_k$, $(\delta^{(k)})_k$, $(\tau^{(k)})_k$ and $(\Delta^{(k)})_k$ so that our operator $T$ is not $\mathcal{U}$-frequently hypercyclic.

\begin{prop}\label{Prop2}
If the sequence $(\gamma_{k})_{k\ge 1}$, defined by $\gamma_k:=2^{-\tau 
^{(k)}}\bigl(\Delta ^{(k)} 
\bigr)^{1-\frac1{p}}$ for every $k\ge 1$, is a non-increasing sequence satisfying $\sum_{k\ge 1}2^{k}\gamma_k^{1/2}\le 1$, and if the following conditions hold true:
\[
4\delta^{(k)}+3\eta^{(k)}\le\Delta^{(k)}\le \eta^{(k+1)},\quad \lim\limits_{k\to\infty }\; {\eta^{(k)}}/ {\Delta^{(k)}}=0 \quad\text{and}\quad \lim\limits_{k\to\infty }\; {\delta ^{(k)}}/ {\eta^{(k)}}=0\]
then $T$ is not $\mathcal{U}$-frequently hypercyclic.
\end{prop}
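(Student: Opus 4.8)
The plan is to apply Lemma~\ref{Theorem 48} to our operator $T$, with a judicious choice of the auxiliary sequences $(\beta_l)$, $(N_l)$, $(R_l)$ and $(L_l)$, so that the final liminf in the conclusion of that lemma is \emph{positive}. Once we know that there exists $\varepsilon>0$ with $\underline{\textrm{dens}}\ \mathcal{N}_T(x,B(0,\varepsilon)^c)>0$ for \emph{every} $x\ne 0$, no single $x$ can have a dense orbit of positive upper density, because any such $x$ would have to enter the small ball $B(0,\varepsilon)$ along a set of indices of upper density $1$, contradicting a positive lower density for its complement. Hence $T$ cannot be $\mathcal{U}$-frequently hypercyclic. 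The role of the two auxiliary Propositions~\ref{Proposition 50} and~\ref{Proposition 51} is to furnish, respectively, the decay hypotheses (1)--(3) and the positivity of the concluding liminf via (the periodicity controlled by) condition~(5).

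The concrete choices I would make are dictated by the block structure of $w$. For $l\in[2^{k-1},2^k)$ I would set $\beta_l=\gamma_k$ (so that $\sum_{l\ge 1}\sqrt{\beta_l}=\sum_k 2^{k-1}\gamma_k^{1/2}\le 1$ by hypothesis), and use Proposition~\ref{Proposition 50} with $C_n=|v_n|\cdot\sup_j\prod|w_s|=2^{-\tau^{(k)}}2^{\delta^{(k)}}$ to obtain conditions (1), (2), (2') and (3); here the bound $C_l(b_{l+1}-b_l)^{(p-1)/p}=2^{-\tau^{(k)}}2^{\delta^{(k)}}(\Delta^{(k)})^{1-1/p}$ must be absorbed into $C\beta_l=C\gamma_k$, which is where $\lim(\delta^{(k)}/\eta^{(k)})=0$ together with the definition of $\gamma_k$ enters. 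For the indices I would take $N_l$ of the order of $\eta^{(k)}$ (the length of the final block of $1$'s in $1/w$, which forces a delay before a small coordinate in $[b_l,b_{l+1})$ produces a large coordinate in the block below), $R_l$ comparable to $\Delta^{(k)}-2\delta^{(k)}$ but still below $\inf_{j\in\bigcup\varphi^{-k}(l)}N_j$ (using $\Delta^{(k)}\le\eta^{(k+1)}$), and $L_l$ of the order of $\delta^{(k)}$, so that $L_l/N_l\asymp\delta^{(k)}/\eta^{(k)}\to 0$ as required. The final periodicity computation via Proposition~\ref{Proposition 51}, applied with the three breakpoints determined by where $|w_{b_l+k}|=1$, shows that $\|P_lT^jP_lx\|\ge\|P_lX\|/2$ for a proportion of $j$'s tending to $1$, which makes the concluding liminf equal to $1$ (after matching the constant, replacing $\|P_lX\|/2$ by $2C\|P_lX\|$ by rescaling $C$).

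The main obstacle I anticipate is verifying condition~(5), namely that for $N_l\le j\le R_l$ and $j+L_l\le k\le R_l$ one has $\|P_lT^kP_{[b_{l+1}-j,b_{l+1})}x\|\ge\|P_{[b_{l+1}-j,b_{l+1})}X\|$. This is exactly the delicate dynamical statement that, after the waiting time $\eta^{(k)}$ encoded in $N_l$ and an extra lag $L_l\asymp\delta^{(k)}$, the relevant coordinates are transported back into the block $[b_l,b_{l+1})$ without net loss of mass; it requires tracking how the products $\prod w_i$ over the various blocks of $\tfrac12,2,1$ cancel, and using $4\delta^{(k)}+3\eta^{(k)}\le\Delta^{(k)}$ to guarantee that the relevant window stays inside the long middle block of $1$'s where $W_n=1$. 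Establishing this inequality uniformly in the allowed range of $(j,k)$, while simultaneously keeping $R_l<\inf_{j\in\bigcup\varphi^{-k}(l)}N_j$ and $(L_l/N_l)\to 0$, is the technical heart of the argument; the remaining conditions (1)--(4) and the limiting computation are comparatively routine bookkeeping built on the two cited propositions.
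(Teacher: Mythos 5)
Your overall skeleton is the same as the paper's (apply Lemma~\ref{Theorem 48} with $\beta_l\asymp\gamma_k$, $N_l\asymp\eta^{(k)}$, $R_l\asymp\Delta^{(k)}-2\delta^{(k)}-2\eta^{(k)}$, $L_l\asymp\delta^{(k)}$, feeding in Propositions~\ref{Proposition 50} and~\ref{Proposition 51}), but one of your central computations is wrong and would sink the verification of hypotheses (2), (2') and (3). You take $C_n=|v_n|\cdot\sup_j\prod|w_s|=2^{-\tau^{(k)}}2^{\delta^{(k)}}$ and propose to ``absorb'' the factor $2^{\delta^{(k)}}$ into $C\beta_l$ using $\delta^{(k)}/\eta^{(k)}\to 0$. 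Both points fail. First, the supremum of the partial products $\prod_{s=b_{\varphi(n)}+1}^{j}|w_s|$ equals $1$, not $2^{\delta^{(k)}}$: in the definition of $w^{(k)}$ the block of $\tfrac12$'s comes \emph{before} the block of $2$'s (and the total product over each block is $1$), so every partial product lies in $[2^{-\delta^{(k)}},1]$. This ordering is precisely the design feature that distinguishes $T$ from $T^{-1}$, and it gives $C_n=2^{-\tau^{(k)}}$, whence the bound of Proposition~\ref{Proposition 50} is exactly $\gamma_k\|P_lX\|$ and matches $\beta_l\asymp\gamma_k$ with no absorption needed. Second, the absorption you describe is impossible: it would force $\beta_l\gtrsim 2^{\delta^{(k)}}\gamma_k$, and then $\sum_l\sqrt{\beta_l}\gtrsim\sum_k 2^{k-1}2^{\delta^{(k)}/2}\gamma_k^{1/2}$, which the hypothesis $\sum_k 2^k\gamma_k^{1/2}\le 1$ does not control (for the parameters used in the final theorem one has $\delta^{(k)}=2\tau^{(k)}$, so $2^{\delta^{(k)}/2}\gamma_k^{1/2}=2^{\tau^{(k)}/2}(\Delta^{(k)})^{(1-1/p)/2}\to\infty$); the hypothesis $\delta^{(k)}/\eta^{(k)}\to 0$ is irrelevant here.

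There is also a logical error in your reduction. You claim that $\underline{\mathrm{dens}}\ \mathcal{N}_T(x,B(0,\varepsilon)^c)>0$ already contradicts $\mathcal{U}$-frequent hypercyclicity because a $\mathcal{U}$-frequently hypercyclic vector ``would have to enter $B(0,\varepsilon)$ along a set of indices of upper density $1$.'' That is false: $\mathcal{U}$-frequent hypercyclicity only requires \emph{positive} upper density of visits to each open set, which is perfectly compatible with the complement having positive lower density. What is actually needed (and what the paper proves) is that the liminf in Lemma~\ref{Theorem 48} equals $1$, so that $\underline{\mathrm{dens}}\ \mathcal{N}_T(x,B(0,\varepsilon)^c)=1$ and hence $\overline{\mathrm{dens}}\ \mathcal{N}_T(x,B(0,\varepsilon))=0$; your concluding computation via Proposition~\ref{Proposition 51} does produce liminf equal to $1$, so this is repairable, but the deduction as you state it is invalid. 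Finally, condition (4) is not ``routine bookkeeping'' valid for every $x\ne 0$ (it fails, e.g., for finitely supported $x$): one must restrict to hypercyclic vectors $x$, for which $\sup_j\sum_{l>n}\|P_nT^jP_lx\|=\infty$, and this restriction suffices since any $\mathcal{U}$-frequently hypercyclic vector is hypercyclic.
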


\begin{proof}
It suffices to show that if $x$ is a hypercyclic vector for $T$ then $x$ is not $\mathcal{U}$-frequently hypercyclic for $T$. To this end, we will show that Lemma~\ref{Theorem 48} can be applied to $x$ by considering $\beta_l=4\,\gamma_k$, $N_{l}=\eta^{(k)}$, $R_l=\Delta^{(k)}-2\delta^{(k)}-2\eta^{(k)}$, $L_l=2\delta^{(k)}$ and $C=1/4$ for every 
$l\in[2^{k-1},2^{k})$, every $k\ge 1$.
It will then remain to show that 
\[\liminf_{l\to\infty }\ \inf_{k\ge R_{l}}\ \dfrac{\ \#\bigl\{0\le 
j\le 
k\,:\, \|P_{l}\,T^{\,j}P_{l}\,x\|\ge \|P_l X\|/2\bigr\}}{k+1}=1.\]
We can already remark that $(\beta_l)$ is a non-increasing sequence with $\sum_{l\ge 1}\sqrt{\beta_l}\le 1$, that $(N_l)$ is a non-decreasing sequence tending to infinity, that $(R_l)$ satisfies $N_l\le R_l<\inf_{j\in \bigcup_n \varphi^{-n}(l)}N_j$ and $R_l< b_{l+1}-b_l$, and that $(L_l)$ satisfies $\frac{L_l}{N_l}\to 0$. Moreover, we have:

\begin{enumerate}
 \item[(1)] Since $\prod_{w=i+1}^{b_{l+1}-1} |w_s|\ge 1$ for every $l\ge 0$, every $b_l\le i<b_{l+1}$, we have $\|P_{n}x\|\le \|P_n X\|$ for every $n\ge 0$;
 \item[(2)] We have for every $k\ge 1$ and every $n\in[2^{k-1},2^{k})$,
\[
|v_{n}|\,\cdot\sup_{j\,\in\,[b_{\varphi (n)},b_{\varphi (n)}+1)}
\ \prod_{s=b_{\varphi (n)+1}}^{j}\!\!\!|w_{s}|\le 2
^{-\tau ^{(k)}}.
\]
Therefore, if we set $C_{n}=2
^{-\tau ^{(k)}}$ for every $n\in [2^{k-1},2^{k})$, Proposition~\ref{Proposition 50} implies that 
for every $k\ge 1$, every $l\in[2^{k-1},2^{k})$ and 
every $0\le n<l$,
\[
 \sup_{j\ge 0}\ \|P_{n}T^{\,j}P_{l}\,x\|\le 2^{-\tau 
^{(k)}}\bigl(\Delta ^{(k)} 
\bigr)^{1-\frac1{p}}
\|P_l X\|\le \dfrac{\beta_{l}}{4}
\|P_l X\|;
\]
 \item[(2')] We also deduce from Proposition~\ref{Proposition 50} that 
 \[\|P_{n}\,T^{\,j}P_{l}\,x\|\le 
\dfrac{\beta _{l}}{4} \|P_{[b_{l+1}-j,b_{l+1})}X\|\] for every $l\ge 1$, every $j\le b_{l+1}-b_l$ and every $0\le n<l$;
\item[(3)] Since $w_{i}^{(k)}=1$ if $\Delta^{k}-\eta^{(k)}\le i<\Delta ^{(k)}$, it follows from Proposition~\ref{Proposition 50} and from the definition of $X$ that for every $l\ge 1$ and every $0\le n<l$,
 \[\sup\limits_{0\le j\le N_{l}}\|P_{n}\,T^{\,j}P_{l}\,x\|\le \dfrac{\beta _{l}}{4} \|P_{[b_{l+1}-N_l,b_{l+1})}X\|\le \dfrac{\beta_l}{4}\|P_{l}x\|;\]
\item[(4)] Since $x$ is hypercyclic, we have $\sup\limits_{j\ge 0}\sum\limits_{l>n}\|P_{n}\,T^{\,j}P_{l}x\|=\infty$;
\item[(5)] Let $k\ge 1$, $l\in[2^{k-1},2^{k})$, $\eta^{(k)}\le j\le \Delta^{(k)}-2\delta^{(k)}-2\eta^{(k)}$ and $j+2\delta^{(k)}\le s\le \Delta^{(k)}-2\delta^{(k)}-2\eta^{(k)}$. We need to show that 
\[\|P_l T^{s}P_{[b_{l+1}-j,b_{l+1})}x\|\ge \|P_{[b_{l+1}-j,b_{l+1})}X\|.\]
If $i\in [\Delta^{(k)}-j,\Delta^{(k)})$, we get
\[P_lT^se_{b_l+i}=-\Big(\prod_{\nu=i+1}^{\Delta^{(k)}-1}w^{(k)}_{\nu}\Big)\Big(\prod_{\nu=1}^{i+s-\Delta^{(k)}}w^{(k)}_{\nu}\Big)e_{b_l+i+s-\Delta^{(k)}}.\]
Therefore, since 
\[i+s-\Delta^{(k)}\ge \Delta^{(k)}-j+j+2\delta^{(k)}-\Delta^{(k)}\ge 2\delta^{(k)}\]
and
\[ i+s-\Delta^{(k)}\le \Delta^{(k)}-1+\Delta^{(k)}-2\delta^{(k)}-2\eta^{(k)}-\Delta^{(k)}\le \Delta^{(k)}-2\delta^{(k)}-2\eta^{(k)}-1,\]
we have  $\prod_{\nu=1}^{i+s-\Delta^{(k)}}w^{(k)}_{\nu}=1$ and thus the desired inequality.\\
\end{enumerate}

We deduce from Lemma~\ref{Theorem 48} that there exists $\varepsilon >0$ such that
\[
\underline{\vphantom{p}\textrm{dens}}\ \mathcal{N}_T\bigl(x,B(0,\varepsilon )^c\bigr)\ge 
\liminf_{l\to\infty }\ \inf_{J\ge R_{l}}\ \dfrac{\ \#\bigl\{0\le 
j\le 
J\,:\, \|P_{l}\,T^{\,j}P_{l}\,x\|\ge \|P_l X\|/2\bigr\}}{J+1}.
\]

Since the assumptions of Proposition~\ref{Proposition 51}  are satisfied for $k_0=2\delta^{(k)}$, $k_1=\Delta^{(k)}-2\delta^{(k)}-2\eta^{(k)}$ and $k_2=\Delta^{(k)}-\eta^{(k)}$, we deduce that for every $J\ge 0$, every $k\ge 1$, and every $l\in[2^{k-1},2^{k})$,
\[
\dfrac{1}{J+1}\ \#\Bigl\{0\le j\le J\,:\,\Vert P_{l}T^{\,j}P_{l}\,x\Vert \ge
\|P_lX\|/2\Bigr\}\ge 1-(16\delta^{(k)}+4\eta^{(k)})\Bigl(\dfrac{1}{J+1}+\dfrac{1}
{\Delta ^{(k)}}\Bigr).
\]
Therefore, for every $k\ge 1$ and every $l\in[2^{k-1},2^{k})$, we get
\begin{align*}
&\inf_{J\ge R_{l}}\ 
\dfrac{1}{J+1}\ \#\Bigl\{0\le j\le J\,:\,\Vert P_{l}T^{\,j}P_{l}\,x\Vert \ge
\|P_lX\|/2\Bigr\}\\
&\quad\ge 1-(16\delta^{(k)}+4\eta^{(k)})\Bigl(\dfrac{1}{\Delta^{(k)}-2\delta^{(k)}-2\eta^{(k)}+1}+\dfrac{1}
{\Delta ^{(k)}}\Bigr)
\end{align*}
and since $\lim\limits_{k\to\infty }\frac{\eta^{(k)}}{\Delta 
^{(k)}}=0$ and $\lim\limits_{k\to\infty }\frac{\delta^{(k)}}{\eta^{(k)}}=0$, we conclude that
\[
 \liminf_{l\to\infty }\ \inf_{J\ge R_{l}}\ 
\dfrac{1}{J+1}\ \#\Bigl\{0\le j\le J\,:\,\Vert P_{l}T^{\,j}P_{l}\,x\Vert \ge
\|P_lX\|/2\Bigr\}=1.\] The vector $x$ is thus not $\mathcal{U}$-frequently hypercyclic since $\underline{\vphantom{p}\textrm{dens}}\ \mathcal{N}_T\bigl(x,B(0,\varepsilon )^c\bigr)=1$ and thus $\overline{\vphantom{p}\textrm{dens}}\ \mathcal{N}_T\bigl(x,B(0,\varepsilon )\bigr)=0$.
\end{proof}

\begin{theorem}
Let $1\le p<\infty$. There exists an invertible operator $T$ on $\ell^p(\mathbb{N})$ such that $T$ is $\mathcal{U}$-frequently hypercyclic and $T^{-1}$ is not $\mathcal{U}$-frequently hypercyclic.
\end{theorem}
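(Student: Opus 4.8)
The plan is to let the desired operator be the inverse of the operator of $C^{+}$-type studied above. Write $T$ for that operator, with the weights $w^{(k)}_i$ and $v^{(k)}=2^{-\tau^{(k)}}$ fixed as in Section~2 and with the four sequences $(\tau^{(k)})$, $(\delta^{(k)})$, $(\eta^{(k)})$, $(\Delta^{(k)})$ still to be chosen. If these sequences satisfy the hypotheses of Proposition~\ref{Prop2}, then $T$ is not $\mathcal{U}$-frequently hypercyclic; if they moreover satisfy the invertibility condition~\eqref{tau} and the hypotheses of Proposition~\ref{Prop1}, then $T$ is invertible and $T^{-1}$ is $\mathcal{U}$-frequently hypercyclic. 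In that case $T^{-1}$ is an invertible $\mathcal{U}$-frequently hypercyclic operator whose inverse $(T^{-1})^{-1}=T$ is not $\mathcal{U}$-frequently hypercyclic, which is precisely the statement (the operator called $T$ in the theorem being $T^{-1}$ here). Thus the entire proof reduces to checking that all these hypotheses can be met simultaneously by a single choice of parameters.

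First I would collect the constraints. From Section~2 the sequences $(\tau^{(k)})$, $(\delta^{(k)})$, $(\eta^{(k)})$ must be strictly increasing sequences of positive integers, and \eqref{tau} requires $\tau^{(k)}\ge 2^{k}$. Proposition~\ref{Prop1} demands $\delta^{(k)}-\tau^{(k)}\to\infty$ and $\liminf_k\delta^{(k)}/\eta^{(k)}<\infty$, and Proposition~\ref{Prop2} demands that $\gamma_k=2^{-\tau^{(k)}}(\Delta^{(k)})^{1-1/p}$ be non-increasing with $\sum_{k\ge1}2^{k}\gamma_k^{1/2}\le1$, that $4\delta^{(k)}+3\eta^{(k)}\le\Delta^{(k)}\le\eta^{(k+1)}$, and that $\eta^{(k)}/\Delta^{(k)}\to0$ and $\delta^{(k)}/\eta^{(k)}\to0$. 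The first useful observation is that the condition $\delta^{(k)}/\eta^{(k)}\to0$ from Proposition~\ref{Prop2} already gives $\liminf_k\delta^{(k)}/\eta^{(k)}=0$, so the $\liminf$ hypothesis of Proposition~\ref{Prop1} is free; the only requirements that Proposition~\ref{Prop1} and invertibility add to the list of Proposition~\ref{Prop2} are $\tau^{(k)}\ge 2^{k}$ and $\delta^{(k)}-\tau^{(k)}\to\infty$.

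Next I would fix the orders of magnitude. Combining $\Delta^{(k)}\le\eta^{(k+1)}$ with $\eta^{(k)}/\Delta^{(k)}\to0$ forces $\eta^{(k)}/\eta^{(k+1)}\to0$, so $(\eta^{(k)})$ must grow at least super-exponentially; a doubly exponential family such as $\eta^{(k)}=2^{2^{k+k_0}}$ (with a fixed shift $k_0$ absorbing the finitely many small-index inequalities) is a natural candidate. I would then set $\Delta^{(k)}=(\eta^{(k)})^{3/2}$, which lies below $\eta^{(k+1)}=(\eta^{(k)})^{2}$, satisfies $\Delta^{(k)}/\eta^{(k)}=(\eta^{(k)})^{1/2}\to\infty$, and keeps $\log_2\Delta^{(k)}=\tfrac{3}{2}\,2^{k+k_0}$ of order $2^{k}$. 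For the remaining two sequences I would pick $\tau^{(k)}$ of the same single-exponential order as $\log_2\Delta^{(k)}$ but with enough slack, e.g.\ $\tau^{(k)}=2^{k+k_0+1}+4k$, so that $\tau^{(k)}\ge 2^{k}$ and $2^{-\tau^{(k)}/2}(\Delta^{(k)})^{(1-1/p)/2}\le 2^{-2k}$; and $\delta^{(k)}=\tau^{(k)}+k$, so that $\delta^{(k)}-\tau^{(k)}=k\to\infty$ while $\delta^{(k)}=o(\eta^{(k)})$. With these choices $\log_2\gamma_k$ is a strictly decreasing negative multiple of $2^{k}$ up to a linear term, giving both monotonicity of $(\gamma_k)$ and $2^{k}\gamma_k^{1/2}\le 2^{-k}$, hence $\sum_k 2^{k}\gamma_k^{1/2}\le1$.

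The step I expect to require the most care is exactly the compatibility of the condition on $\gamma_k$ with the size relations $\tau^{(k)}<\delta^{(k)}\ll\eta^{(k)}\ll\Delta^{(k)}$ imposed by the other hypotheses: the control of $\gamma_k$ wants $\tau^{(k)}$ large relative to $\log_2\Delta^{(k)}$, while the inequalities $4\delta^{(k)}+3\eta^{(k)}\le\Delta^{(k)}$ and $\delta^{(k)}-\tau^{(k)}\to\infty$ force $\Delta^{(k)}$ to dominate $\tau^{(k)}$ itself. The resolution is that $\Delta^{(k)}$ enters $\gamma_k$ only through $\log_2\Delta^{(k)}$, so taking $\Delta^{(k)}$ a fixed power of the doubly exponential $\eta^{(k)}$ keeps $\log_2\Delta^{(k)}$ single-exponential in $k$, comfortably below $\tau^{(k)}$, and still leaves $\tau^{(k)}=o(\eta^{(k)})$ so that $\delta^{(k)}=\tau^{(k)}+k$ is $o(\eta^{(k)})$. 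Once the shift $k_0$ is taken large enough that every displayed inequality holds for all $k\ge1$ and the three limits are checked (all of which are elementary), Propositions~\ref{Prop1} and~\ref{Prop2} apply to $T$ and yield the theorem.
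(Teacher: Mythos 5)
Your proposal is correct and takes essentially the same route as the paper: identify the theorem's operator as the inverse of the C$^{+}$-type operator $T$, reduce everything to Propositions~\ref{Prop1} and~\ref{Prop2} together with the invertibility condition~\eqref{tau} (observing that the $\liminf$ hypothesis of Proposition~\ref{Prop1} is implied by $\delta^{(k)}/\eta^{(k)}\to 0$), and then exhibit one explicit family of parameters meeting all constraints. The paper does exactly this, differing only in the witness it writes down ($\tau^{(k)}=2^{Ck^2}$, $\delta^{(k)}=2^{Ck^2+1}$, $\eta^{(k)}=2^{2Ck^2+1}$, $\Delta^{(k)}=2^{2Ck^2+k+4}$ with $C$ large) versus your doubly exponential choice, and both verifications are routine.
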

\begin{proof}
In view of Propositions ~\ref{Prop1} and \ref{Prop2}, it suffices to show that there exists increasing sequences of parameters $(\tau^{(k)})$, $(\delta^{(k)})$, $(\eta^{(k)})$ and $(\Delta^{(k)})$ satisfying 
\begin{itemize}
\item $\tau^{(k)}\ge 2^k$ for every $k\ge 1$ (see \eqref{tau});
\item $\lim_k \delta^{(k)}-\tau^{(k)}=\infty$;
\item  the sequence $(\gamma_{k})_{k\ge 1}$, defined by $\gamma_k:=2^{-\tau 
^{(k)}}\bigl(\Delta ^{(k)} 
\bigr)^{1-\frac1{p}}$, is a non-increasing sequence satisfying $\sum_{k\ge 1}2^{k}\gamma_k^{1/2}\le 1$;
 \item $4\delta^{(k)}+3\eta^{(k)}\le\Delta^{(k)}\le \eta^{(k+1)}$ for every $k\ge 1$;
\item $\lim\limits_{k\to\infty }\; {\eta^{(k)}}/ {\Delta^{(k)}}=0$;
\item  $\lim\limits_{k\to\infty }\; {\delta ^{(k)}}/ {\eta^{(k)}}=0$.
\end{itemize}

We can compute that the following choice of parameters satisfies each of these conditions if the constant $C$ is chosen sufficiently large:
\[\tau^{(k)}=2^{Ck^2} ,\quad \delta^{(k)}=2^{Ck^2+1},\quad \eta^{(k)}=2^{2Ck^2+1} \quad \text{and}\quad \Delta^{(k)}=2^{2Ck^2+k+4}.\]

\end{proof}

\end{document}